\theoremstyle{plain}
\newtheorem{theorem}{Theorem}[section]
\newtheorem{lemma}[theorem]{Lemma}
\newtheorem{proposition}[theorem]{Proposition}
\newtheorem{corollary}[theorem]{Corollary}
\theoremstyle{definition}
\newtheorem{definition}[theorem]{Definition}
\theoremstyle{remark}
\newtheorem{example}[theorem]{Example}
\newtheorem{remark}[theorem]{Remark}
\newcommand{\floor}[1]{\lfloor #1 \rfloor}
\begin{document}

\title{Defining rough sets using tolerances compatible with an equivalence}

\author[J.~J{\"a}rvinen]{Jouni J{\"a}rvinen}

\address[J.~J{\"a}rvinen]{Department of Mathematics and Statistics, University of Turku, 20014~Turku, Finland}
\email{jjarvine@utu.fi}

\author[L.~Kov\'{a}cs]{L\'{a}szl\'{o} Kov\'{a}cs}
\address[L.~Kov\'{a}cs]{Institute of Information Science, University of Miskolc, 3515~Miskolc-Egyetemv\'{a}ros, Hungary} 
\email{kovacs@iit.uni-miskolc.hu}

\author[S.~Radeleczki]{S\'andor Radeleczki}
\address[S.~Radeleczki]{Institute of Mathematics, University of Miskolc, 3515~Miskolc-Egyetemv\'{a}ros, Hungary}
\email{matradi@uni-miskolc.hu}

\keywords{Rough set, equivalence relation, tolerance relation, set covering, knowledge representation, completely distributive complete lattice}

\begin{abstract}
We consider tolerances $T$ compatible with an equivalence $E$ on $U$, meaning that the relational product 
$E \circ T$ is included in $T$. We present the essential properties of $E$-compatible tolerances and
study rough approximations defined by such $E$ and $T$. We consider rough set pairs $(X_E,X^T)$,
where the lower approximation $X_E$ is defined as is customary in rough set theory, but $X^T$ allows more elements to be possibly in $X$
than $X^E$. Motivating examples of $E$-compatible tolerances are given, and the essential lattice-theoretical properties of the ordered set of rough sets 
$\{ (X_E,X^T) \mid X \subseteq U\}$ are established.
\end{abstract}


\maketitle

\section{Introduction}

Rough sets were introduced by Z. Pawlak in \cite{Pawl82}. He was assuming that our knowledge about the objects 
of a universe $U$ is given in the terms of an equivalence $E$ on $U$. 
In rough set theory, equivalences are treated as indistinguishability relations. Indistinguishability of objects $x$ 
and $y$ means that we do not have a way to distinguish $x$ and $y$ based on our information. Indistinguishability relations 
are hence assumed to be reflexive, symmetric, and transitive.

A \emph{tolerance relation} (or simply \emph{tolerance}) is a reflexive and symmetric binary relation.
In this work we treat tolerances as similarity relations. This means that we do not assume similarity to be transitive. 
For instance in \cite{Luce56} is given this example justifying non-transitivity:
``Find a subject who prefers a cup of coffee with one cube of sugar to one with five cubes 
(this should not be difficult). Now prepare 401 cups of coffee with $(1 + i / 100) x$ grams of sugar, $i = 0, 1,\ldots, 400$, 
where $x$ is the weight of one cube of sugar. It is evident that he will be indifferent between cup $i$ and cup $i + 1$, for any $i$, 
but by choice he is not indifferent between $i = 0$ and $i = 400$.'' In fact, there are also opinions that similarity relation 
should be only reflexive, not symmetric, because similarity can be sometimes seen directional. As noted in \cite{Tversky77}:  
`We say ``the portrait resembles the person'' rather than ``the person resembles the portrait''.'. 
However, in this work we assume that similarity relations are tolerances.

In the last decades, several extensions of the basic rough set model were proposed in the research literature.  
The main motivation of these extensions was to provide efficient modelling of imprecise or missing data values. 
There are early articles from 1980s and 1990s, in which rough approximations are defined in terms of tolerances.
For instance,  E.~Or{\l}owska and Pawlak considered in \cite{OrlPaw84} so-called ``nondeterministic information systems'' 
in which attribute values of objects may be sets instead of single values.
By using such information it is possible to define tolerance relations representing similarity of objects.
In addition, J.A.~Pomyka{\l}a \cite{Pomy94} and B.~Konikowska \cite{Konikowska1997} have considered approximation 
operations defined by strong similarity relations of nondeterministic information systems. 
Also in \cite{Skowron1996} equivalences were replaced by tolerances to represent our knowledge about the objects.

First systematic studies on different types of binary relation (including tolerances) was given in 
\cite{Yao96}. Or{\l}owska has studied so-called information relations reflecting distinguishability or
indistinguishability of the elements of the universe of discourse in \cite{Orlowska94,Orlowska98}.
They were also considered in \cite{Zhao2007}. It should also be noted that tolerances are
closely related to set-coverings, and rough approximation defined by coverings are studied for the first time by
W.~\.{Z}akowski in \cite{Zak83}. In \cite{Yao2012}, a review of covering based rough set approximations is presented.
Furthermore, authors of this paper have considered lattice-theoretical properties of rough sets defined by
tolerances, for example, in \cite{Jarv01,JarRad14,JarRad18}. 

The limitations of the single-equivalence approach were analysed among others in \cite{Qian2010}, where so-called multi-granulation 
rough set model was introduced. In that paper, for two equivalences $P$ and $Q$ on $U$, the lower and upper approximations of 
$X \subseteq U$ were defined, respectively, as
\begin{align*}
  X_{P + Q } &=  \{  x \in U \mid P(x) \subseteq X \text{ or } Q(x) \subseteq X \} \\
\intertext{and} 
  X^{P + Q} &=  \{  x \in U \mid P(x) \cap X \neq \emptyset \text{ and } Q(x) \cap X \neq \emptyset \}.
\end{align*}
Here $R(x) = \{ y \in U \mid x \, R \, y\}$ for any binary relation $R$ on $U$ and $x \in U$.
An extension of this approach was given in \cite{Xu2012}, where a finite family of disjoint subsets of the attribute set of an 
information system is used to define the approximations. This paper also investigates some measures, such as the quality and 
the precision of approximation. Multi-granulation of fuzzy rough sets was presented in \cite{Xu2014}.
Relationships between relation-based rough sets and covering-based rough sets are investigated in \cite{ResGom14}.

Our approach differs from the ones appearing in the literature, because our main idea is that the lower approximation of sets
are defined in terms of the equivalence $E$ and the upper approximations are defined in terms of a tolerance $T$ compatible with $E$, 
that is, $E \circ T \subseteq T$. This condition means that if $x$ and $y$ are $E$-indistinguishable and $y$ is $T$-similar with some $z$, then 
this $z$ is $T$-similar to $x$ also. 

We start with the following definitions. For $X \subseteq U$, the \emph{lower approximation} of $X$ is defined as
\[
X_R = \{x \in U \mid R(x) \subseteq X\},
\]
and the \emph{upper approximation} of $X$ is given by
\[
X^R = \{x \in U \mid R(x) \cap X \neq \emptyset \}.
\]
Let us now recall from literature \cite{Jarv07} some essential properties of these approximations. We denote
by $\wp(U)$ the \emph{power set} of $U$, that is, $\wp(U) = \{ X \mid X \subseteq U\}$. Let $\mathcal{H} \subseteq \wp(U)$
be a family of subsets of $U$. Then,
\[ \Big ( \bigcup_{X \in \mathcal{H}} X \Big )^R = \bigcup_{X \in \mathcal{H}} X^R \quad \text{ and } \quad 
   \Big ( \bigcap_{X \in \mathcal{H}} X \Big )_R = \bigcap_{X \in \mathcal{H}} X_R . \]
If $R$ is reflexive, then $X_R \subseteq X \subseteq X^R$ and we can partition the elements of $U$ into three
disjoint classes with respect to the set $X$:

\begin{enumerate}[\rm (1)]
\item The elements which are certainly in $X$. These are interpreted as
the elements in $X_R$, because if $x \in X_R$, then all the
elements to which $x$ is $R$-related are in $X$.

\item The elements which certainly are not in $X$. These are elements $x$
of $U$ such that all the elements to which $x$ is $R$-related are
not in $X$, that is, $R(x) \cap X = \emptyset$, or equivalently,
$R(x) \subseteq X^c$, where $X^c$ is the \emph{complement} of $X$, that is,
$X^c = U \setminus X$.

\item The elements whose belonging in $X$ cannot be decided in terms of the
knowledge $R$. These are the elements $x \in U$ which are $R$-related at least with one element 
of $X$ and also with at least one element from $X$'s complement $X^{c}$. 
In other words, $R(x) \cap X \neq \emptyset$ and $R(x) \nsubseteq X$, that is, 
$x\in X^R \setminus X_R$.
\end{enumerate}

Let $T$ be a tolerance on $U$. It is known \cite{Jarv99} that the pair $({_T},{^T})$ is an order-preserving
Galois connection on $\wp(U)$. From this fact it follows that for any $X \subseteq U$,
\[ 
(X_T)^T \subseteq X \subseteq (X^T)_T, \quad ((X_T)^T)_T = X_T, \quad ((X^T)_T)^T = X^T.
\]
Moreover, if we denote
\[ \wp(U)^T = \{X^T \mid X \subseteq U\} \quad \text{and} \quad  \wp(U)_T = \{X_T \mid X \subseteq U\}, \]
then the ordered set $(\wp(U)^T,\subseteq)$ is a complete lattice such that for any $\mathcal{H} \subseteq \wp(U)$,
\begin{equation}\label{Eq:UpLattice}
\bigvee_{X \in \mathcal{H}} X^T = \bigcup_{X \in \mathcal{H}} X^T 
\quad \mbox{ and  } \quad 
\bigwedge_{X \in \mathcal{H}} X^T 
=  \Big ( \Big ( \bigcap_{X \in \mathcal{H}} X^T \Big )_T \Big )^T .
\end{equation}
Analogously, $(\wp(U)_T,\subseteq)$ is a complete lattice such that for any $\mathcal{H} \subseteq \wp(U)$,
\begin{equation}\label{Eq:DownLattice}
\bigvee_{X \in \mathcal{H}} X_T = \Big ( \Big ( \bigcup_{X \in \mathcal{H}} X_T \Big )^T \Big )_T
\quad \mbox{ and  } \quad 
\bigwedge_{X \in \mathcal{H}} X_T = \bigcap_{X \in \mathcal{H}} X_T  .
\end{equation}
Let us also note that if $E$ is an equivalence, then $\wp(U)_E = \wp(U)^E$. We present more properties of this
complete lattice in Section~\ref{Sec:CombatileTolerances} while considering $E$-definable sets.

In this work, we define the rough set of set $X \subseteq U$ as a pair $(X_E, X^T)$. The idea behind
studying pairs $(X_E,X^T)$, where $T$ is an $E$-compatible tolerance, is that the equivalence $E$ represents 
``strict'' information and the information represented by $T$ is ``soft''. Hence $X_E$ is defined as it is usual in 
rough set theory, but $X^T$ is now more permissible, because $E \subseteq T$ and thus $X^E \subseteq X^T$. Additionally, 
we have $(X^T)^E = X^T$, meaning that $X^T$ is a union of $E$-classes. 

The set of $(E,T)$-rough sets is $\mathit{RS}(E,T) = \{ (X_E, X^T) \mid X \subseteq U\}$ and
$\mathit{RS}(E,T)$ can be ordered by the coordinatewise inclusion. We show that $\mathit{RS}(E,T)$
forms a complete lattice which is not generally distributive. Finally,
we give some conditions under which $\mathit{RS}(E,T)$ is distributive and defines a regular double $p$-algebra.

This work is structured as follows. In Section~\ref{Sec:CombatileTolerances} we give the basic properties of
$E$-compatible tolerances and rough approximations defined by them. The section ends by three subsections giving
examples from where $E$-compatible tolerances can be found. Section~\ref{Sec:Lattices} is devoted to the study of
the lattice-theoretical properties of $\mathit{RS}(E,T)$. In Section~\ref{Sec:FurtherProperties} we consider
some further properties of $\mathit{RS}(E,T)$, such as it being a completely distributive regular double 
pseudocomplemented lattice. We also study the case in which the $E$-compatible tolerance is an equivalence.
Some concluding remarks end the work.

\section{Tolerances compatible with equivalences} \label{Sec:CombatileTolerances}

If $E$ is an equivalence on $U$, we denote for any $x$ the ``$E$-neighbourhood'' $E(x)$ of $x$ by $[x]_E$, 
because this notation is conventional in the literature. The set $[x]_E$ is the \emph{equivalence class} of $x$
with respect to the equivalence relation $E$. This is also said to be the $E$-equivalence class of $x$, and often
even the $E$-class of $x$.
The \emph{quotient set} $U/E$ is the set of all equivalence classes, that is, $U/E = \{[x]_E \mid x \in U\}$.

Let $R$ and $S$ be two binary relations on $U$. The \emph{product} $R \circ S$ of the relations $R$ and $S$ is
defined by 
\[ R \circ S = \{ (x,y) \in U^2 \mid (\exists z \in U) \, x \, R \, z  \ \text{ and } \ z \, S \, y\}. \]
The following lemma connects products of relations to rough approximation operations.

\begin{lemma} \label{Lem:Products}
If $S$ and $T$ are binary relations on $U$, then for all $X \subseteq U$, 
\begin{enumerate}[\rm (a)]
\item $X^{S \circ T} = (X^T)^S$;
\item $X_{S \circ T} = (X_T)_S$. 
\end{enumerate}
\end{lemma}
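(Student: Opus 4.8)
The plan is a direct element-chase from the definitions. For part (a), I would fix $X \subseteq U$ and $x \in U$ and unfold both sides. On the left, $x \in X^{S \circ T}$ means $(S \circ T)(x) \cap X \neq \emptyset$, i.e.\ there is some $y \in X$ with $(x,y) \in S \circ T$; by the definition of the relational product this is the same as saying there exist $z \in U$ and $y \in X$ with $x \, S \, z$ and $z \, T \, y$. On the right, $x \in (X^T)^S$ means $S(x) \cap X^T \neq \emptyset$, i.e.\ there is some $z$ with $x \, S \, z$ and $z \in X^T$, and $z \in X^T$ unfolds to $T(z) \cap X \neq \emptyset$, i.e.\ there is $y \in X$ with $z \, T \, y$. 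Both conditions are literally ``there exist $z \in U$ and $y \in X$ with $x \, S \, z$ and $z \, T \, y$'', so the two sets coincide.

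For part (b) I would either run the analogous chase --- $x \in X_{S \circ T}$ iff $(S\circ T)(x) \subseteq X$ iff every $y$ with $(x,y)\in S\circ T$ lies in $X$ iff for all $z$ and all $y$, $x\,S\,z$ and $z\,T\,y$ imply $y \in X$; while $x \in (X_T)_S$ iff $S(x)\subseteq X_T$ iff for all $z$, $x\,S\,z$ implies $T(z)\subseteq X$ iff for all $z$ and all $y$, $x\,S\,z$ and $z\,T\,y$ imply $y\in X$ --- or, more economically, deduce (b) from (a) by complementation. Recall that $X_R = \big((X^c)^R\big)^c$ for every relation $R$: indeed $x \in \big((X^c)^R\big)^c$ iff $R(x) \cap X^c = \emptyset$ iff $R(x) \subseteq X$. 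Hence, applying (a) to $X^c$,
\[
X_{S \circ T} = \big((X^c)^{S\circ T}\big)^c = \big(((X^c)^T)^S\big)^c,
\]
and since $(X^c)^T = (X_T)^c$ (the same identity read the other way), the right-hand side equals $\big(((X_T)^c)^S\big)^c = (X_T)_S$, as required.

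The argument is entirely routine; the only point that needs care is the ``reversal'' in the order of the operations --- the outer operation on the right-hand side is the one indexed by $S$, matching the fact that in $S \circ T$ one first moves along $S$ and then along $T$ --- and, in the direct proof of (b), keeping the two universally quantified variables in the correct scope when commuting them. I would present (a) in full and then obtain (b) via the complementation shortcut, so as not to repeat essentially the same quantifier bookkeeping twice.
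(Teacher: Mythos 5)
Your proof is correct, and part (a) together with the direct quantifier-chase you sketch for (b) is exactly the argument the paper gives. The only difference is presentational: the paper proves (b) by the direct chase, whereas you prefer to derive it from (a) via the duality $X_R = \bigl((X^c)^R\bigr)^c$; that shortcut is valid (both identities you invoke are immediate from the definitions) and saves repeating the quantifier bookkeeping, at the small cost of introducing the complementation identities that the paper does not need at this point.
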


\begin{proof} (a) For all $x \in U$,
\begin{align*}
x \in X^{S \circ T} & \iff (\exists  y \in U) \, x \, (S \circ T) \, y \ \& \ y \in X \\
		    & \iff (\exists  y, z \in U) \, x \, S \, z \ \& \ z \, T \, y \ \& \ y \in X \\
		    & \iff (\exists  z \in X^T) \, x \, S \, z \\
		    & \iff x \in (X^T)^S. 
\end{align*}

(b) For all $x \in U$,
\begin{align*}
x \in X_{S \circ T} & \iff (\forall  y \in U) \, x \, (S \circ T) \, y \Rightarrow y \in X \\
		    & \iff (\forall  y,z \in U) \, x \, S \, z \ \& \ z \, T \, y \Rightarrow y \in X \\
		    & \iff (\forall  z \in U) \, x \, S \, z \Rightarrow z \in X_T \\
		    & \iff x \in (X_T)_S. \qedhere
\end{align*}
\end{proof}

\begin{definition} \label{Def:Compatible}
Let $E$ be an equivalence on $U$. A tolerance $T$ on $U$ is called \emph{$E$-compatible} if
\begin{equation} \label{Eq:Main}
E \circ T \subseteq T .
\end{equation}
\end{definition}

The idea behind this definition is that $x \, E \, z$, $z \, T \, y$ leads to $x \, T \, y$.
It is clear that if $T$ is an $E$-compatible tolerance, then $E \subseteq T$. The order of $E$ and $T$ in the
relation product has no importance either. Indeed, if $E$ is an equivalence and $T$ is a tolerance on $U$, then 
$E^{-1} = E$, $T^{-1} = T$ and $(E \circ T)^{-1} = T^{-1} \circ E^{-1}$, and we have that
\begin{equation} \label{Eq:Inverse}
E \circ T \subseteq T \iff (E \circ T)^{-1} \subseteq T^{-1} \iff T^{-1} \circ E^{-1} \subseteq  T^{-1} \iff T \circ E \subseteq T.
\end{equation}
Hence, $E \circ T \subseteq T$ and $T \circ E \subseteq T$ are equivalent conditions. Because $T \subseteq E \circ T$ and 
$T \subseteq T \circ E$, we can immediately write the following characterization.

\begin{lemma} \label{Lem:Commutation}
If $E$ is an equivalence and $T$ a tolerance on $U$, then
\begin{center}
$T$ is $E$ compatible \ $\iff$ \ $E \circ T = T$ \ $\iff$ \ $T \circ E = T$. 
\end{center}
\end{lemma}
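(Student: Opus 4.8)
The plan is to reduce everything to the definition $E \circ T \subseteq T$ of $E$-compatibility together with the equivalence $E \circ T \subseteq T \iff T \circ E \subseteq T$ already recorded in \eqref{Eq:Inverse}. The only extra ingredient needed is the reverse inclusions $T \subseteq E \circ T$ and $T \subseteq T \circ E$, which hold because $E$ is reflexive: if $x \, T \, y$, then $x \, E \, x$ gives $x \, (E \circ T) \, y$, and symmetrically $x \, (T \circ E) \, y$.

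First I would note that combining $T \subseteq E \circ T$ with the hypothesis $E \circ T \subseteq T$ yields $E \circ T = T$, so $E$-compatibility implies $E \circ T = T$; conversely $E \circ T = T$ trivially gives $E \circ T \subseteq T$, i.e.\ $E$-compatibility. This establishes the first equivalence.

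Next, the same argument applied on the other side, using $T \subseteq T \circ E$, shows $T \circ E \subseteq T \iff T \circ E = T$. By \eqref{Eq:Inverse} the condition $T \circ E \subseteq T$ is equivalent to $E \circ T \subseteq T$, hence to $E$-compatibility, so $E$-compatibility is also equivalent to $T \circ E = T$. Chaining the two equivalences gives the statement.

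I do not anticipate a genuine obstacle here: the lemma is essentially a bookkeeping consequence of the reflexivity of $E$ and of the symmetry observation in \eqref{Eq:Inverse}. The only point to be careful about is to invoke reflexivity of $E$ (not of $T$) when justifying $T \subseteq E \circ T$ and $T \subseteq T \circ E$, and to keep the side on which $E$ is composed consistent throughout.
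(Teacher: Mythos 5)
Your proposal is correct and follows the paper's own route exactly: the paper likewise derives the lemma from the equivalence $E \circ T \subseteq T \iff T \circ E \subseteq T$ established in \eqref{Eq:Inverse} together with the inclusions $T \subseteq E \circ T$ and $T \subseteq T \circ E$ coming from the reflexivity of $E$. Nothing is missing.
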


Interestingly, in the literature can be find analogous notions%
\footnote{We would like to thank an anonymous referee for pointing this out.}.
In particular, in \cite{Slowinski95} the authors consider ``similarity relations extending equivalences''.
They say that a binary relation $R$ on $U$ is a \emph{similarity relation extending an equivalence $E$} on $U$ if:
\begin{enumerate}[({Ex}1)]
 \item For all $x \in U$, $[x]_E \subseteq R(x)$.
 \item For all $x,y \in U$, $y \in R(x)$ implies $[y]_E \subseteq R(x)$.
\end{enumerate}
Note that by (Ex1), the similarity relation is reflexive, but symmetry does not follow from this definition. We can now prove that 
if a similarity relation is a tolerance, the two notions coincide.

\begin{proposition}
Let $E$ be an equivalence and $T$ a tolerance on $U$. The following are equivalent:
\begin{enumerate}[\normalfont (i)]
\item $T$ is $E$-compatible.
\item $T$ is a similarity relation extending $E$.
\end{enumerate}
\end{proposition}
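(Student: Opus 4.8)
The plan is to prove both implications directly, unwinding the definitions in terms of the neighbourhoods $T(x)$ and $[x]_E = E(x)$, while freely using the symmetry of $T$ and $E$ and the equivalent forms $E \circ T = T$ and $T \circ E = T$ furnished by Lemma~\ref{Lem:Commutation} and \eqref{Eq:Inverse}.

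First I would show (i) $\Rightarrow$ (ii). Assuming $E \circ T \subseteq T$, reflexivity of $T$ gives $E \subseteq E \circ T \subseteq T$, so $[x]_E = E(x) \subseteq T(x)$ for every $x \in U$, which is (Ex1). For (Ex2), take $x,y \in U$ with $y \in T(x)$ and let $z \in [y]_E$, so that $x \, T \, y$ and $y \, E \, z$. Then $x \, (T \circ E) \, z$, and since $T \circ E \subseteq T$ by \eqref{Eq:Inverse}, we obtain $x \, T \, z$, that is, $z \in T(x)$. Hence $[y]_E \subseteq T(x)$, establishing (Ex2).

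Next I would show (ii) $\Rightarrow$ (i). Suppose $(x,y) \in E \circ T$, so there is $z \in U$ with $x \, E \, z$ and $z \, T \, y$. By symmetry of $T$ this means $z \in T(y)$, so (Ex2) applied to the pair $(y,z)$ yields $[z]_E \subseteq T(y)$. From $x \, E \, z$ we get $x \in [z]_E \subseteq T(y)$, i.e. $y \, T \, x$, and symmetry of $T$ gives $x \, T \, y$. Thus $E \circ T \subseteq T$, so $T$ is $E$-compatible. (Observe that (Ex1) is not actually needed for this direction, since $E \subseteq T$ already follows from $E \circ T \subseteq T$.)

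There is no real obstacle here: the only point requiring care is keeping track of the symmetry of $T$ when passing between ``$(x,y) \in T$'' and ``$y \in T(x)$'', so that the condition (Ex2), which is phrased asymmetrically via neighbourhoods, is applied to the correct pair. Everything else is a routine unfolding of definitions, and the equivalence $E \circ T = T \iff T \circ E = T$ from \eqref{Eq:Inverse} makes the two directions essentially mirror images of each other.
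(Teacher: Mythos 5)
Your proof is correct and follows essentially the same route as the paper: both directions unwind the definitions of (Ex1)--(Ex2) via neighbourhoods, using $E\subseteq T$ for (Ex1), the equivalence $E\circ T\subseteq T \iff T\circ E\subseteq T$ for (Ex2), and an application of (Ex2) together with the symmetry of $T$ for the converse. The only difference is cosmetic (you make the uses of symmetry explicit where the paper leaves them implicit).
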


\begin{proof}
(i)$\Rightarrow$(ii): Condition (Ex1) is clear since $E \subseteq T$. Suppose $y \in T(x)$ and $z \in [y]_E$. Then
$x \, T \, y$ and $y \, E \, z$, in other words $(x,z) \in T \circ E$. Because $T$ is $E$-compatible, we have $(x,z) \in T$,
that is, $z \in T(x)$. Thus, $[y]_E \subseteq T(x)$ and (Ex2) holds.

\medskip
(ii)$\Rightarrow$(i): Suppose $(x,z) \in E \circ T$. Then, there exists $y$ such that $x \, E \, y$ and $y \, T \, z$.
By (Ex2), $y \in T(z)$ implies $[y]_E \subseteq T(z)$. Because $x \in [y]_E$, we have $x \in T(z)$. Thus, $(x,z) \in T$ and
$T$ is $E$-compatible. 
\end{proof}
\noindent%
Note also that there are studies on compatibility of fuzzy relations; see \cite{Kheniche16} and the references therein.

\medskip%
Let $T$ be a tolerance on $U$. The \emph{kernel} of $T$ is defined by
\[ \ker T = \{ (x,y) \mid T(x) = T(y) \}. \]
The relation $\ker T$ is clearly an equivalence on $U$, and $\ker T \subseteq T$, because for $(x,y) \in \ker T$,
$x \in T(x) = T(y)$, that is, $(x,y) \in T$.
Our next proposition characterizes $E$-compatible tolerances.

\begin{proposition} \label{Prop:CompTolerance}
Let $E$ be an equivalence on $U$. A tolerance $T$ on $U$ is $E$-compatible if and only if 
$E \subseteq \ker T$. 
\end{proposition}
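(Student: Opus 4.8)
The plan is to prove both implications directly, simply by unwinding the definitions of $E$-compatibility and of $\ker T$ and exploiting the symmetry of $E$. No auxiliary machinery is needed beyond what has already been recorded; in particular the equivalent forms $E \circ T = T$ and $T \circ E = T$ from Lemma~\ref{Lem:Commutation} are available if one wishes to shorten the argument.

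For the implication ``$E$-compatible $\Rightarrow$ $E \subseteq \ker T$'', I would take an arbitrary pair $(x,y) \in E$ and show $T(x) = T(y)$. Pick $z \in T(x)$, that is, $x \, T \, z$. Since $E$ is symmetric we have $y \, E \, x$, so $(y,z) \in E \circ T \subseteq T$, i.e.\ $z \in T(y)$; this gives $T(x) \subseteq T(y)$. Interchanging the roles of $x$ and $y$ — legitimate because $E$ is symmetric — yields $T(y) \subseteq T(x)$, hence $T(x) = T(y)$ and therefore $(x,y) \in \ker T$.

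For the converse ``$E \subseteq \ker T$ $\Rightarrow$ $E$-compatible'', I would take $(x,z) \in E \circ T$ and choose a witness $y$ with $x \, E \, y$ and $y \, T \, z$. Then $(x,y) \in E \subseteq \ker T$ forces $T(x) = T(y)$, and since $z \in T(y) = T(x)$ we get $x \, T \, z$, i.e.\ $(x,z) \in T$. Thus $E \circ T \subseteq T$, so $T$ is $E$-compatible.

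I do not expect a genuine obstacle: each step is a one-line set-theoretic verification. The only point that needs a little attention is remembering, in the forward direction, to use the symmetry of $E$ to obtain \emph{both} inclusions $T(x) \subseteq T(y)$ and $T(y) \subseteq T(x)$ rather than just one; invoking the equivalent condition $T \circ E = T$ from Lemma~\ref{Lem:Commutation} is an alternative way to handle this symmetrically in a single stroke.
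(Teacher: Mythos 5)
Your proof is correct and follows essentially the same route as the paper: unwinding the definitions in both directions, using the symmetry of $E$ (equivalently, the identity $T\circ E=T$ from Lemma~\ref{Lem:Commutation}) to get both inclusions $T(x)\subseteq T(y)$ and $T(y)\subseteq T(x)$ in the forward direction, and reading off $x\,T\,z$ from $T(x)=T(y)$ in the converse.
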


\begin{proof}
Suppose that $T$ is $E$-compatible. We show that $E \subseteq \ker T$.
Assume $(x,y) \in E$. Let $z \in T(x)$. 
Then $z \, T \, x$ and $x \, E \, y$, that is, $z \, (T \circ  E) \, y$. 
By Lemma~\ref{Lem:Commutation}, $T \circ E = T$. Hence, $z \, T \, y$ and $z \in T(y)$.
We have proved that $T(x) \subseteq T(y)$. Similarly, we can show that $T(y) \subseteq T(x)$.
Therefore, $T(x) = T(y)$ and $(x,y) \in \ker T$.

On the other hand, suppose $E \subseteq \ker T$. Let $(x,y) \in E \circ T$.
Then, there is $z$ such that $x \, E \, z$ and $z \, T \, y$. Because $(x,z) \in \ker T$,
$y \in T(z) = T(x)$. Thus, $x \, T \, y$ and $T$ is $E$-compatible.
\end{proof}

Proposition~\ref{Prop:CompTolerance} means that if $x$ and $y$ are $E$-indistinguishable, also their 
$T$-neighbourhoods are the same, that is, $T(x) = T(y)$. Another consequence of 
Proposition~\ref{Prop:CompTolerance} is that $\ker T$ is the greatest equivalence with whom
the tolerance $T$ is compatible. If $F$ is an equivalence on $U$, then $\ker F = F$. 
This means that $F$ is $E$-compatible if and only if $E \subseteq F$.

We will next consider rough approximations. It is known (see e.g.\@ \cite{Jarv99}) that if $T$ 
is a tolerance on $U$, then for all $X \subseteq U$,
\[ X^T = \bigcup \{ T(x) \mid T(x) \cap X \neq \emptyset\} \]
In addition, if $E$ is an equivalence on $U$, then for any $X \subseteq U$,
\[ X_E = \bigcup \{ [x]_E \mid [x]_E \subseteq X\} .\]

By Lemmas \ref{Lem:Products} and \ref{Lem:Commutation}, we can write the following equations.

\begin{lemma} \label{Lem:AllTheSame}
Let $E$ be an equivalence on $U$ and let $T$ be an $E$-compatible tolerance. 
For all $X \subseteq U$, the following equalities hold:
\begin{enumerate}[\rm (a)]
 \item $(X^T)^E = X^{E \circ T} = X^T = X^{T \circ E} = (X^E)^T$;
 \item $(X_T)_E = X_{E \circ T} = X_T = X_{T \circ E} = (X_E)_T$. 
\end{enumerate}
\end{lemma}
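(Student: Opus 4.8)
The plan is to derive everything from the two earlier results that have been explicitly set up for this purpose: Lemma~\ref{Lem:Products}, which converts relational products into compositions of approximation operators, and Lemma~\ref{Lem:Commutation}, which says that $E$-compatibility is equivalent to the identities $E \circ T = T = T \circ E$. So the proof is essentially a substitution argument with no genuine obstacle; the only care needed is bookkeeping of which lemma is applied where.

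For part (a), I would argue as follows. Since $T$ is $E$-compatible, Lemma~\ref{Lem:Commutation} gives $E \circ T = T$ and $T \circ E = T$. Applying Lemma~\ref{Lem:Products}(a) with the pair $(S,T) = (E,T)$ yields $X^{E \circ T} = (X^T)^E$, and substituting $E \circ T = T$ turns the left-hand side into $X^T$; this gives the first equality $(X^T)^E = X^T$ once rewritten. Applying Lemma~\ref{Lem:Products}(a) again with $(S,T) = (T,E)$ gives $X^{T \circ E} = (X^E)^T$, and $T \circ E = T$ turns the left side into $X^T$, giving $X^T = (X^E)^T$. Chaining these with the trivial identities $X^{E \circ T} = X^T = X^{T \circ E}$ produces the full display in (a). Part (b) is word-for-word the same with $^{\,\cdot}$ replaced by $_{\,\cdot}$ throughout, using Lemma~\ref{Lem:Products}(b) in place of (a); in the write-up I would simply say ``(b) is proved dually'' or ``analogously'' rather than repeat the computation.

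The main thing to get right is the direction of the indices in Lemma~\ref{Lem:Products}: that lemma is stated as $X^{S \circ T} = (X^T)^S$, i.e. the \emph{outer} operator in the composition corresponds to the \emph{left} factor $S$ of the product. So to obtain $(X^E)^T = X^{T\circ E}$ I must instantiate $S := T$ and the lemma's ``$T$'' as $E$ — it is easy to swap these by accident. Once that is pinned down, there is no hard part; this lemma is purely a convenience repackaging of the previous two, collecting in one place the statement that, for an $E$-compatible tolerance, the upper approximation $X^T$ is simultaneously a union of $E$-classes and unaffected by pre- or post-composing with $E$, and dually for $X_T$.

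Here is the proof I would write.

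\begin{proof}
(a) Since $T$ is $E$-compatible, Lemma~\ref{Lem:Commutation} gives $E \circ T = T$ and $T \circ E = T$, so trivially
\[ X^{E \circ T} = X^T = X^{T \circ E}. \]
By Lemma~\ref{Lem:Products}(a), applied with the relations $E$ and $T$, we have $X^{E \circ T} = (X^T)^E$; combined with $E \circ T = T$ this yields $(X^T)^E = X^T$. Applying Lemma~\ref{Lem:Products}(a) instead to the relations $T$ and $E$ gives $X^{T \circ E} = (X^E)^T$; combined with $T \circ E = T$ this yields $(X^E)^T = X^T$. Putting these equalities together gives
\[ (X^T)^E = X^{E \circ T} = X^T = X^{T \circ E} = (X^E)^T. \]

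(b) This is proved in the same way, using Lemma~\ref{Lem:Products}(b) in place of Lemma~\ref{Lem:Products}(a) and the identities $E \circ T = T = T \circ E$ from Lemma~\ref{Lem:Commutation}.
\end{proof}
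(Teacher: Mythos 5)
Your proof is correct and is exactly the argument the paper intends: the paper states this lemma with no written proof beyond the remark that it follows from Lemmas~\ref{Lem:Products} and~\ref{Lem:Commutation}, and your write-up simply makes the required instantiations explicit (including the correct handling of the index order in Lemma~\ref{Lem:Products}). Nothing to add.
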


Let $E$ be an equivalence on $U$. A set $X \subseteq U$ is called \emph{$E$-definable} if $X_E = X^E$. 
This means that the set of elements which certainly are in $X$ coincides with the set of elements which 
possibly are in $X$. We denote by $\mathrm{Def}(E)$ the family of $E$-sets. It is a well-known fact (see e.g. \cite{Jarv99}) 
that the following conditions are equivalent for any $X \subseteq U$:
\begin{enumerate}[(i)]
\item $X \in \mathrm{Def}(E)$;
\item $X = X^E$;
\item $X = X_E$;
\item $X = \bigcup \mathcal{H}$ for some $\mathcal{H} \subseteq U/E$;
\item $x \in X$ and $x \, E \, y$ implies $y \in X$.
\end{enumerate}
Notice that these conditions mean that $\mathrm{Def}(E) = \wp(U)_E = \wp(U)^E$. 
It is also known  (see e.g. \cite{Jarv99}) that $(\mathrm{Def}(E),\subseteq)$ is a complete lattice in which
\[ \bigwedge \mathcal{H} = \bigcap \mathcal{H} \quad \text{ and } \quad \bigvee \mathcal{H} = \bigcup \mathcal{H} \]
for all $\mathcal{H} \subseteq \mathrm{Def}(E)$. The family of sets $\mathrm{Def}(E)$ is also closed under
complementation, that is, $X^c \in \mathrm{Def}(E)$ for all $X \in \mathrm{Def}(E)$.

Let $E$ be an equivalence on $U$ and let $T$ be an $E$-compatible tolerance. By Lemma~\ref{Lem:AllTheSame}, 
we have $(X^T)^E = X^T$ and $(X_T)_E = X_T$ for any $X \subseteq U$. This means that each $X^T$ and $X_T$ is $E$-definable. 
This implies also that $(X^T)_E = X^T$ and $(X_T)^E = X_T$. Because $E$-definable sets are unions of $E$-classes, our next 
lemma gives a description of $X_T$ and $X^T$ in terms of equivalence classes of $E$.

\begin{lemma} \label{Lem:UnionOfClasses}
Let $E$ be an equivalence on $U$ and let $T$ be an $E$-compatible tolerance. For all $X \subseteq U$,
\begin{enumerate}[\rm (a)]
 \item $X^T = \bigcup \{ [x]_E \mid T(x) \cap X \neq \emptyset\}$;
 \item $X_T = \bigcup \{ [x]_E \mid T(x) \subseteq X \}$.
\end{enumerate}

\end{lemma}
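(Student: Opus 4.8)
The plan is to prove both equalities by double inclusion, and the key tool will be Proposition~\ref{Prop:CompTolerance}, which tells us that $E \subseteq \ker T$, that is, $x \, E \, y$ implies $T(x) = T(y)$. Besides this I only need the definitions of the approximation operators in the pointwise form $z \in X^T \iff T(z) \cap X \neq \emptyset$ and $z \in X_T \iff T(z) \subseteq X$.

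For part (a), first I would check that the right-hand side is contained in $X^T$. If $z \in [x]_E$ for some $x$ with $T(x) \cap X \neq \emptyset$, then $z \, E \, x$, so by Proposition~\ref{Prop:CompTolerance} we have $T(z) = T(x)$, whence $T(z) \cap X \neq \emptyset$ and therefore $z \in X^T$. The reverse inclusion is immediate: if $z \in X^T$, then $T(z) \cap X \neq \emptyset$, and since $z \in [z]_E$, the element $z$ itself witnesses that $z$ belongs to $\bigcup \{ [x]_E \mid T(x) \cap X \neq \emptyset\}$.

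The argument for part (b) is the same with ``$\cap X \neq \emptyset$'' replaced by ``$\subseteq X$'': if $z \in [x]_E$ and $T(x) \subseteq X$, then $T(z) = T(x) \subseteq X$ gives $z \in X_T$, and conversely $z \in X_T$ means $T(z) \subseteq X$, so $z \in [z]_E$ places $z$ in the union on the right. Since everything collapses to the single fact that $T(z) = T(x)$ whenever $z \, E \, x$, there is no genuine obstacle; the only point requiring a little care is to route the argument through Proposition~\ref{Prop:CompTolerance} (or, equivalently, through Lemma~\ref{Lem:AllTheSame} together with the known formula $X^T = \bigcup\{T(x) \mid T(x) \cap X \neq \emptyset\}$, each $T(x)$ then being a union of $E$-classes) rather than unwinding $E \circ T \subseteq T$ by hand, which would be needlessly roundabout.
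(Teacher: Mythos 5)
Your proof is correct and follows essentially the same route as the paper's: both directions by double inclusion, with the easy inclusion witnessed by $z \in [z]_E$ and the nontrivial one reduced to a previously established consequence of $E$-compatibility. The only (cosmetic) difference is that you invoke Proposition~\ref{Prop:CompTolerance} ($E \subseteq \ker T$, so $z \, E \, x$ gives $T(z) = T(x)$), whereas the paper invokes Lemma~\ref{Lem:AllTheSame} in the form $(X^T)^E = X^T$ and $(X_T)^E = X_T$; the two facts are interchangeable here and your argument is complete as written.
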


\begin{proof}
(a) If $y \in X^T$, then $T(y) \cap X \neq \emptyset$ and $y \in \bigcup \{ [x]_E \mid T(x) \cap X \neq \emptyset\}$.
On the other hand, if  $y \in \bigcup \{ [x]_E \mid T(x) \cap X \neq \emptyset\}$, then there is 
$z \in X^T$ such that $y \in [z]_E$. This means that $z \in [y]_E \cap X^T$ and thus $y \in (X^T)^E = X^T$.

(b) Suppose that $y \in X_T$. Then $T(y) \subseteq X$ gives $y \in \bigcup \{ [x]_E \mid T(x) \subseteq X \}$.
Conversely, if $y \in \bigcup \{ [x]_E \mid T(x) \subseteq X \}$, then there is $z \in U$ such that $y \, E \, z$ and
$T(z) \subseteq X$. Then, $y \, E \, z$ and $z \in X_T$ give $y \in (X_T)^E = X_T$.
\end{proof}

\begin{example}
Let $E$ be an equivalence on $U$ and let $T$ be an $E$-compatible tolerance. By Lemma~\ref{Lem:UnionOfClasses}, 
\begin{align*}
T(x) = \{x\}^T = \bigcup \{ [y]_E \mid T(y) \cap \{x\} \neq \emptyset \} 
     = \bigcup \{ [y]_E \mid x \in T(y) \} 
     = \bigcup \{ [y]_E \mid y \in T(x) \}.
\end{align*}
This means that $T(x)$ is a union of $E$-classes for any $x \in U$.

In fact, an $E$-class behaves like one ``point'' with respect to the tolerance $T$. 
The situation can be depicted as in Figure~\ref{Fig:Compatible}, where $E$-classes are represented by circles. 
A line connecting two $E$-classes mean that all elements between these two classes are mutually $T$-related. 
For instance, $x_3$ is $T$-related with $x_1$ and $x_5$, but $x_1$ and $x_5$ are not  $T$-related. The
$T$-neighbourhood of $x_3$ is a union of $E$-classes, that is,
\[ T(x_3) = \{x_1,x_2\} \cup \{x_3\} \cup \{x_4,x_5,x_6\}.\]

\begin{figure}
\centering
\includegraphics[width=100mm]{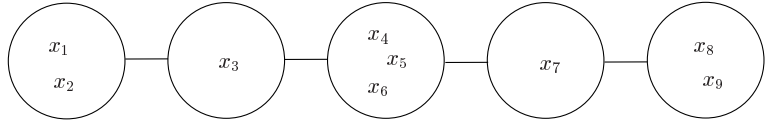}
\caption{Equivalence classes of $E$ are represented by circles. 
A line connecting two $E$-classes mean that all elements between these two classes are mutually $T$-related. 
For instance, $x_3$ is $T$-related with $x_1$ and $x_5$, but $x_1$ and $x_5$ are not  $T$-related.}
\label{Fig:Compatible}
\end{figure}
\end{example}

We end this section by three short subsections containing some motivating examples of $E$-compatible tolerances.

\subsection{Strong and weak indistinguishability relations}

An \emph{information system} in the sense of Pawlak \cite{Pawl81} is a triple $(U,A,\{V_a\}_{a \in A})$, where
$U$ is a set of \emph{objects}, $A$ is a set of \emph{attributes}, and $V_a$ is the \emph{value set} of $a \in A$. Each attribute
is a mapping $a \colon U \to V_a$ and $a(x)$ is the value of the attribute $a$ of for $x$.

For any $B \subseteq A$, the \emph{strong indistinguishability relation} of $B$ is defined by
\[ \mathrm{ind}(B) = \{ (x,y) \mid (\forall a \in B) \, a(x) = a(y) \}. \]
In the literature, strong indistinguishability relations are commonly called ``indiscernibility relations''. The 
\emph{weak indistinguishability relation} of $B$ is given by 
\[ \mathrm{wind}(B) = \{ (x,y) \mid (\exists a \in B) \, a(x) = a(y) \}. \]
Let us denote for any $a \in A$, the relation $\mathrm{ind}(\{a\})$ simply by $\mathrm{ind}(a)$. It is obvious that for all $B \subseteq A$,
\[  \mathrm{ind}(B) = \bigcap_{a \in B} \mathrm{ind}(a) \quad \text{ and } \quad  \mathrm{wind}(B) = \bigcup_{a \in B} \mathrm{ind}(a). \]
It is also clear the for any $\emptyset \neq B \subseteq A$, $\mathrm{ind}(B)$ is an equivalence and $\mathrm{wind}(B)$ is a tolerance
on $U$. Additionally, we can write the following lemma.

\begin{lemma} \label{Lem:InfSyst}
Let $(U,A,\{V_a\}_{a \in A})$ be an information system and $\emptyset \neq B \subseteq A$. Then  $\mathrm{wind}(B)$ is  $\mathrm{ind}(B)$-compatible.
\end{lemma}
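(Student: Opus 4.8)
The plan is to verify the defining inclusion $\mathrm{ind}(B) \circ \mathrm{wind}(B) \subseteq \mathrm{wind}(B)$ directly, unwinding the definitions of the two relations. So suppose $(x,z) \in \mathrm{ind}(B) \circ \mathrm{wind}(B)$; by definition of the relational product there is some $y \in U$ with $x \, \mathrm{ind}(B) \, y$ and $y \, \mathrm{wind}(B) \, z$. The first condition gives $a(x) = a(y)$ for \emph{every} $a \in B$, while the second gives \emph{some} $a_0 \in B$ with $a_0(y) = a_0(z)$. Combining these for the particular attribute $a_0$ yields $a_0(x) = a_0(y) = a_0(z)$, so there is an attribute in $B$ on which $x$ and $z$ agree, i.e. $(x,z) \in \mathrm{wind}(B)$. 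Since $\mathrm{wind}(B)$ is a tolerance and $\mathrm{ind}(B)$ an equivalence (both noted just before the lemma), Definition~\ref{Def:Compatible} then gives that $\mathrm{wind}(B)$ is $\mathrm{ind}(B)$-compatible.

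Alternatively, I would invoke Proposition~\ref{Prop:CompTolerance} and instead check $\mathrm{ind}(B) \subseteq \ker \mathrm{wind}(B)$; this amounts to showing that $(x,y) \in \mathrm{ind}(B)$ implies $\mathrm{wind}(B)(x) = \mathrm{wind}(B)(y)$. For one inclusion, take $z \in \mathrm{wind}(B)(x)$, pick $a \in B$ with $a(x) = a(z)$, and use $a(x) = a(y)$ to get $a(y) = a(z)$, hence $z \in \mathrm{wind}(B)(y)$; the reverse inclusion is symmetric since $\mathrm{ind}(B)$ is symmetric. Either route is essentially a one-line argument, but the second has the minor advantage of reusing the structural characterization already established.

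The only thing requiring any care — and the closest this has to a ``main obstacle'' — is the interaction of the universal quantifier hidden in $\mathrm{ind}(B)$ with the existential quantifier hidden in $\mathrm{wind}(B)$: the attribute $a_0$ witnessing similarity of $y$ and $z$ need not be fixed in advance, but because $x$ and $y$ agree on \emph{all} of $B$, whatever $a_0$ arises can immediately be transported to $x$. I would state this observation explicitly so the reader sees why $B$ nonempty suffices and why no compatibility hypothesis on the value sets is needed. No further machinery (not even Lemma~\ref{Lem:Commutation}) is required, so I expect the write-up to occupy only a few lines.
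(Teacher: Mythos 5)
Your main argument is exactly the paper's proof: unwind $(x,z)\in\mathrm{ind}(B)\circ\mathrm{wind}(B)$, transport the single witnessing attribute from the $\mathrm{wind}(B)$-step back along the $\mathrm{ind}(B)$-step, and conclude $(x,z)\in\mathrm{wind}(B)$. The proposal is correct and essentially identical to the paper's proof (the alternative via Proposition~\ref{Prop:CompTolerance} is a valid but unnecessary variant).
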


\begin{proof}
Suppose $(x,y) \in {\rm ind}(B) \circ \mathrm{wind}(B)$. Then there is $z \in U$ such that $a(x)  = a(z)$ for all $a \in B$ and there is
$b \in B$ such that $b(z) = b(y)$. Since $b \in B$, $b(x) = b(z)$. This implies $b(x) = b(z) = b(y)$ and $(x,y) \in \mathrm{wind}(B)$.
\end{proof}

\begin{example}\label{Ex:SandorMedical}
Suppose that $U$ is a set of people and the set $A$ of attributes consists of results of the medical test that can be performed
in a hospital for patients. For instance, $a \in A$ can be the attribute ``blood pressure'' and $a(x) = \text{``normal''}$ means that
the patient $x$ has blood pressure readings in the range from 120 over 80 (120/80) to 140 over 90 (140/90).

Let $X \subseteq U$ be a set of people which are known to have some illness. Let $B \subseteq A$ be a set of medical tests whose 
results are relevant in the diagnostics of the disease $X$. The lower approximation $X_{\mathrm{ind}(B)}$ consists of patients that 
certainly have the illness $X$. If $x \in X_{\mathrm{ind}(B)}$ and $(x,y) \in \mathrm{ind}(B)$, then also $y$ has the illness $X$, 
because all people having the same symptoms as $y$ are known to be sick. On the other hand, $X^{\mathrm{wind}(B)}$ contains persons which potentially 
have the illness $X$, because if $x \in X^{\mathrm{wind}(B)}$, then $x$ has at least one common meaningful symptom with a person having the illness $X$. 
Therefore, if $x \in X^{\text{wind}(B)}$, then we cannot exclude the possibility that $x$ is having the illness $X$.
\end{example}

\subsection{Tolerances induced by coverings} \label{SubSec:Coverings}

A collection $\mathcal{C}$ of nonempty subsets of $U$ is a \emph{covering} of $U$ if $\bigcup \mathcal{C} = U$. 
Each covering $\mathcal{C}$ of $U$ defines a tolerance
\[ T_\mathcal{C} = \{ (x,y) \mid (\exists B \in \mathcal{C}) \, x,y \in B\} \]
on $U$, called the \emph{tolerance induced by $\mathcal{C}$}.
The following lemma is well-known, but we give its proof for the sake of completeness.

\begin{lemma} \label{Lem:Covering}
Let $\mathcal{C}$ be a covering of $U$ and denote $T = T_\mathcal{C}$. For any $X \subseteq U$,
\[ X^T = \bigcup \{ B \in \mathcal{C} \mid B \cap X \neq \emptyset \}. \]
\end{lemma}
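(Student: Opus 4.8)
The plan is to prove the two inclusions separately, using the fact that $X^T = \bigcup\{T(x) \mid T(x) \cap X \neq \emptyset\}$, which was recalled earlier from \cite{Jarv99}, together with the definition of $T_{\mathcal{C}}$.

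First I would show $\bigcup\{B \in \mathcal{C} \mid B \cap X \neq \emptyset\} \subseteq X^T$. Suppose $y$ belongs to some $B \in \mathcal{C}$ with $B \cap X \neq \emptyset$, and pick $z \in B \cap X$. Then $y, z \in B$, so $(y,z) \in T_{\mathcal{C}} = T$ by the definition of the induced tolerance; hence $z \in T(y) \cap X$, which shows $T(y) \cap X \neq \emptyset$, i.e. $y \in X^T$.

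Conversely, I would show $X^T \subseteq \bigcup\{B \in \mathcal{C} \mid B \cap X \neq \emptyset\}$. Let $y \in X^T$, so there is $z \in T(y) \cap X$. From $(y,z) \in T = T_{\mathcal{C}}$ there exists $B \in \mathcal{C}$ with $y, z \in B$. Since $z \in X$, this $B$ satisfies $B \cap X \neq \emptyset$, and $y \in B$, so $y$ lies in the claimed union.

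There is no real obstacle here — the statement is essentially unwinding the two definitions — so the only thing to be careful about is invoking the characterization $X^T = \bigcup\{T(x) \mid T(x)\cap X \neq \emptyset\}$ correctly (equivalently, one can just use the raw definition $y \in X^T \iff T(y)\cap X \neq \emptyset$, which is how $X^T$ was introduced). I would also note in passing that $\mathcal{C}$ being a covering guarantees $T_{\mathcal{C}}$ is reflexive (each $x$ lies in some $B \in \mathcal{C}$), so that $T$ is indeed a tolerance, though this is not strictly needed for the displayed identity.
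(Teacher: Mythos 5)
Your proof is correct and follows essentially the same route as the paper: both directions are obtained by unwinding the definition $y \in X^T \iff T(y)\cap X \neq \emptyset$ together with the definition of $T_{\mathcal{C}}$. The only difference is the order in which the two inclusions are presented, which is immaterial.
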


\begin{proof}
Assume $x \in X^T$, that is, $T(x) \cap X \neq \emptyset$. This means that there is $y \in X$ such that $x \, T \, y$. Hence there is
$B \in \mathcal{C}$ which contains both $x$ and $y$. We have that $B \cap X \neq \emptyset$ and $x \in B$. So, 
$x \in  \bigcup \{ B \in \mathcal{C} \mid B \cap X \neq \emptyset \}$.

On the other hand, suppose $x \in  \bigcup \{ B \in \mathcal{C} \mid B \cap X \neq \emptyset \}$. This means that $x \in B$ for some
$B \in \mathcal{C}$ such that $B \cap X \neq \emptyset$. Therefore, there is an element $y$ in $B \cap X$. Now $y \in T(x)$ and $y \in X$. We have
$x \in X^T$.
\end{proof}

Let $T$ be a tolerance on $U$. A nonempty subset $X$ of $U$ is a \emph{$T$-preblock} if $X \times X \subseteq T$. 
Note that if $B$ is a $T$-preblock, then $B \subseteq T(x)$ for all $x \in B$. A \emph{$T$-block} is a $T$-preblock that is 
maximal with respect to the inclusion relation. Each tolerance $T$ is completely determined by its blocks, that is, 
$a \, T \, b$ if and only if there exists a block $B$ such that $a, b \in B$. In addition, if $B$ is a block, then
\begin{equation}\label{Eq:Block}
B = \bigcap_{x \in B} T(x) . 
\end{equation}
We may characterize tolerances compatible with an equivalence in terms of tolerance blocks.

\begin{proposition}
Let $E$ be an equivalence on $U$. A tolerance $T$ on $U$ is $E$-compatible if and only if each $T$-block is $E$-definable.
\end{proposition}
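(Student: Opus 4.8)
The plan is to reduce everything to two facts already available: a tolerance $T$ is completely determined by its blocks (so $a\,T\,b$ holds iff some $T$-block contains both $a$ and $b$), and by Lemma~\ref{Lem:Commutation} the $E$-compatibility of $T$ is equivalent to $T\circ E=T$. I will also use the characterization of $E$-definable sets listed above, namely that $X$ is $E$-definable precisely when it is closed under $E$, i.e. $x\in X$ and $x\,E\,y$ imply $y\in X$.

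For the direction ``$T$ is $E$-compatible $\Rightarrow$ every $T$-block is $E$-definable'', I would fix a $T$-block $B$, take any $b\in B$ and any $c$ with $b\,E\,c$, and show that $B\cup\{c\}$ is a $T$-preblock. The only new pairs to verify are $(x,c)$ and $(c,x)$ for $x\in B$, together with $(c,c)$, the last being immediate by reflexivity. Since $B$ is a preblock and $b\in B$, we have $x\,T\,b$; combining this with $b\,E\,c$ gives $(x,c)\in T\circ E=T$ by Lemma~\ref{Lem:Commutation}, and $(c,x)\in T$ follows by symmetry. Thus $B\cup\{c\}$ is a preblock containing $B$, so maximality of $B$ forces $c\in B$. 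Hence $B$ is closed under $E$, i.e. $E$-definable. An alternative route here is to invoke \eqref{Eq:Block} together with Proposition~\ref{Prop:CompTolerance}: the latter gives $T(b)=T(c)$, and since $B\subseteq T(b)$ we get $x\in T(b)=T(c)$ for every $x\in B$, whence $c\in\bigcap_{x\in B}T(x)=B$.

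For the converse, I would assume every $T$-block is $E$-definable and take $(x,y)\in E\circ T$, so there is $z$ with $x\,E\,z$ and $z\,T\,y$. Since $T$ is determined by its blocks, choose a $T$-block $B$ with $z,y\in B$. As $B$ is $E$-definable, $z\in B$ and $z\,E\,x$ give $x\in B$; then $x,y\in B$ yields $x\,T\,y$. Therefore $E\circ T\subseteq T$, so $T$ is $E$-compatible.

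I do not anticipate any serious obstacle: each implication is essentially one line once the block description of tolerances and Lemma~\ref{Lem:Commutation} are in hand. The only point requiring a little care is the first direction, where one must check that $B\cup\{c\}$ is genuinely a $T$-preblock (not merely that $c$ is $T$-related to the chosen $b$) before appealing to the maximality of $B$; this is precisely the place where the full strength $T\circ E=T$ is needed rather than just the inclusion $E\subseteq T$.
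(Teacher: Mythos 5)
Your proof is correct and follows essentially the same route as the paper: in the forward direction you show $B\cup\{c\}$ is a $T$-preblock and invoke maximality (the paper does this via $E\subseteq\ker T$ and $T(x)=T(y)$, which is precisely your stated alternative), and your converse argument is the paper's verbatim up to the harmless interchange of $E\circ T$ and $T\circ E$.
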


\begin{proof} Suppose $T$ is $E$-compatible and let $B$ be a $T$-block. If $x \in B$ and $x \, E \, y$, then
$(x,y) \in E \subseteq \ker T$  implies $T(x) = T(y)$. Since $x \in B$, we have $B \subseteq T(x) = T(y)$. Now
$B \subseteq T(y)$ means that $B \cup \{y\}$ is a $T$-preblock containing $B$. Because $B$ is a block, we obtain
$B \cup \{y\} = B$ and $y \in B$. Thus, $B$ is $E$-definable.

On the other hand, assume that each $T$-block is $E$-definable. Suppose $x \, T \, y$ and $y \, E \, z$. Because $x \, T \, y$,
there is a $T$-block $B$ such that $x,y \in B$. Because $B$ is  $E$-definable by assumption, $y \, E \, z$ gives $z \in B$.
Since $B$ is a $T$-block, $x \, T \, z$ holds. Hence, we have shown that $T$ is $E$-compatible.
\end{proof}

A covering $\mathcal{C}$ is \emph{irredundant} if $\mathcal{C} \setminus\{B\}$ is not a covering of $U$ for any $B \in \mathcal{C}$.
Note that if $\mathcal{C}$ is an irredundant covering, then for any $B \in \mathcal{C}$ there exists an element $x$ which does not 
belong to any other set in $\mathcal{C}$, that is, $x \notin \bigcup ( \mathcal{C} \setminus \{B\})$.
Obviously, each equivalence $E$ on $U$ is such that its equivalence classes $U/E$ form an irredundant covering of $U$ and that 
the ``tolerance'' induced by $U/E$ is $E$. Tolerances induced by an irredundant covering of $U$ play an important role 
in Section~\ref{Sec:FurtherProperties}. 
It is known (see \cite{JarRad18,JarRad19}) that if $T$ is a tolerance induced by an irredundant covering, then this
covering is $\{ T(x) \mid \text{$T(x)$ is a block} \}$.

\begin{lemma} \label{Lem:CoveringKernel}
Let $T$ be a tolerance induced by an irredundant covering $\mathcal{C}$ of $U$. Then,
\[ \ker T = \{ (x,y) \mid (\forall B \in \mathcal{C}) \, x \in B \iff y \in B\} .\]
\end{lemma}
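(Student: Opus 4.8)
The plan is to prove the set equality in Lemma~\ref{Lem:CoveringKernel} by double inclusion, using the fact recalled just above the statement, namely that the irredundant covering $\mathcal{C}$ inducing $T$ is exactly $\{T(x) \mid T(x) \text{ is a block}\}$, together with the defining property of irredundant coverings: for each $B \in \mathcal{C}$ there is an element $x_B$ belonging to no other member of $\mathcal{C}$. Write $R$ for the right-hand side relation $\{(x,y) \mid (\forall B \in \mathcal{C})\, x \in B \iff y \in B\}$.

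For the inclusion $\ker T \subseteq R$, suppose $(x,y) \in \ker T$, so $T(x) = T(y)$. Let $B \in \mathcal{C}$ with $x \in B$; I must show $y \in B$. Since $B$ is a block and $x \in B$, we have $B \subseteq T(x)$, and in fact (by \eqref{Eq:Block}) $B = \bigcap_{z \in B} T(z)$. The key move is to locate, using irredundancy, the distinguished element $x_B \in B$ that lies in no other member of $\mathcal{C}$; then $T(x_B)$, being one of the sets in the covering $\{T(z) \mid T(z) \text{ is a block}\}$ that contains $x_B$, must equal $B$ itself — this is where I expect the main subtlety, since I need to argue $T(x_B) = B$ rather than merely $B \subseteq T(x_B)$, which follows because $x_B$ belongs to a unique block-neighbourhood in the covering and that neighbourhood is $B$. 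Granting $T(x_B) = B$, from $x \in B = T(x_B)$ we get $x_B \in T(x) = T(y)$, hence $x_B \in T(y)$, so $y \in T(x_B) = B$. The reverse direction ($y \in B \Rightarrow x \in B$) is symmetric, so $(x,y) \in R$.

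For the inclusion $R \subseteq \ker T$, suppose $(x,y) \in R$, i.e.\ $x$ and $y$ lie in exactly the same members of $\mathcal{C}$. I claim $T(x) = T(y)$. Take $w \in T(x)$; then $x \, T \, w$, so there is a block $B \in \mathcal{C}$ with $x, w \in B$. Since $x \in B$ and $(x,y) \in R$, also $y \in B$, whence $y \, T \, w$ and $w \in T(y)$. This shows $T(x) \subseteq T(y)$, and by symmetry $T(y) \subseteq T(x)$, so $T(x) = T(y)$ and $(x,y) \in \ker T$.

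The genuine obstacle is the first inclusion, specifically pinning down that the distinguished element $x_B$ of an irredundant covering has $T(x_B) = B$: one needs that $x_B$ lies in only one block of the covering (namely $B$), which forces its $T$-neighbourhood — being a union of the blocks through it, and here the covering consists of block-neighbourhoods — to coincide with $B$. Once that is in hand, everything else is routine symmetry. An alternative that avoids singling out $x_B$ is to prove directly that for a tolerance induced by an irredundant covering, $\ker T = \{(x,y) \mid x$ and $y$ belong to the same blocks of the covering$\}$, but this ultimately rests on the same irredundancy argument, so I would present the version above.
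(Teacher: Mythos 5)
Your proof is correct and follows essentially the same route as the paper: the inclusion $R\subseteq\ker T$ is the observation that $T(z)=\bigcup\{B\in\mathcal{C}\mid z\in B\}$, and the inclusion $\ker T\subseteq R$ hinges on the distinguished element $x_B\in B$ supplied by irredundancy, which the paper uses as a witness for a contradiction ($z\in T(x)\setminus T(y)$) while you use it directly via $T(x_B)=B$. The detour through blocks and the fact $\mathcal{C}=\{T(x)\mid T(x)\text{ is a block}\}$ is harmless but unnecessary, since $T(x_B)=B$ already follows from $x_B$ lying in no member of $\mathcal{C}$ other than $B$.
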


\begin{proof}
Let us denote $E_\mathcal{C} =  \{ (x,y) \mid (\forall B \in \mathcal{C}) \, x \in B \iff y \in B\}$.
If $(x,y) \in E_\mathcal{C}$, then $\{ B \in \mathcal{C} \mid x \in B\}$ is equal to  $\{ B \in \mathcal{C} \mid y \in B\}$.
This implies 
\[
T(x) = T_\mathcal{C}(x) = \bigcup \{ B \in \mathcal{C} \mid x \in B\} 
    = \bigcup \{ B \in \mathcal{C} \mid y \in B \} = T_\mathcal{C}(y) = T(y).
\]
Thus $(x,y) \in \ker T$. On the other hand, suppose that  $(x,y) \in \ker T$, which means that $T(x) = T(y)$.
Suppose that there is $B \in \mathcal{C}$ such that $x \in B$, but $y \notin B$. Because $\mathcal{C}$ is an
irredundant covering, there is $z \in B$ such that $z \notin \bigcup ( \mathcal{C} \setminus \{B\})$. This gives
that $z \in T(x) = \bigcup \{B \in \mathcal{C} \mid x \in B\}$, but $z \notin T(y) =  \bigcup \{B \in \mathcal{C} \mid y \in B\}$,
a contradiction. Therefore, for all $B \in \mathcal{C}$, $x \in B$ implies $y \in B$. Similarly, we can show
that $y \in B$ implies $x \in B$ for all $B \in \mathcal{C}$. Thus, 
\[\{ B \in \mathcal{C} \mid x \in B\} =  \{ B \in \mathcal{C} \mid y \in B\},\]
and $(x,y) \in E_\mathcal{C}$. We have now proved $\ker T =  E_\mathcal{C}$.
\end{proof}

\begin{example} \label{Ex:Venn}
If $T$ is a tolerance induced by an irredundant covering $\mathcal{C}$ of $U$, then $\ker T$ can be illustrated by a
``Venn diagram'' of $\mathcal{C}$. The equivalence classes of $\ker T$ are the ``distinct'' areas in the
diagram. For instance, if $\mathcal{C} = \{B_1,B_2,B_3\}$ is the irredundant covering depicted in
Figure~\ref{Fig:Covering} and $T$ is induced by $\mathcal{C}$, then $\ker T$ has seven equivalence classes
$c_1, c_2, \ldots, c_7$. 

Let us denote $E = \ker T$. For any $X \subseteq U$, $X_E$ is the union of the classes $c_i$ which are included in $X$ 
and $X^T$ is the union the $B_i$-sets which with intersect $X$.

\begin{figure}
\centering
\includegraphics[width=55mm]{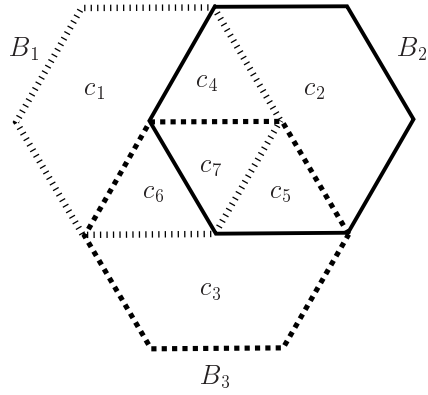}
\caption{A tolerance $T$ induced by an irredundant covering $\mathcal{C} = \{B_1,B_2,B_3\}$.
The equivalence classes of $\ker T$ are the ``distinct'' areas $c_1, \ldots, c_7$ of the diagram.}
\label{Fig:Covering}
\end{figure}
\end{example}

\subsection{Tolerances as similarity relations}

Let $(U,A,\{V_a\}_{a \in A})$ be an information system such that $V_a \subseteq \mathbb{R}$ for each $a \in A$,
where $\mathbb{R}$ denotes the set of real numbers. Suppose that for any $a \in A$, there exists a threshold
$\varepsilon_a \geq 0$ which is interpreted so that the objects $x$ and $y$ are $a$-\emph{similar} if and only 
if $a(x)$ and $a(y)$ differ from each other by at most $\varepsilon_a$.

Suppose $B \subseteq A$. We define
\[ \mathrm{sim}(B) = \{ (x,y) \mid (\forall a \in B) \, | a(x) - a(y) | \leq \varepsilon_a \}, \]
where $|x|$ denotes the absolute value of $x \in \mathbb{R}$. Note that if $\varepsilon_a = 0$ for all $a \in B$, 
then $\mathrm{sim}(B) = \mathrm{ind}(B)$.

\begin{lemma} \label{Lem:SimilarityRelation}
Let $(U,A,\{V_a\}_{a\in A})$ be an information system such that $V_a \subseteq \mathbb{R}$ for each $a \in A$.
For any $B \subseteq A$, $\mathrm{sim}(B)$ is $\mathrm{ind}(B)$-compatible.
\end{lemma}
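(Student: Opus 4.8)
The plan is to verify the defining inclusion $\mathrm{ind}(B) \circ \mathrm{sim}(B) \subseteq \mathrm{sim}(B)$ directly from the definitions, exactly in the style of the proof of Lemma~\ref{Lem:InfSyst}. First I would take an arbitrary pair $(x,y) \in \mathrm{ind}(B) \circ \mathrm{sim}(B)$ and unpack it: there exists $z \in U$ with $(x,z) \in \mathrm{ind}(B)$ and $(z,y) \in \mathrm{sim}(B)$, i.e.\ $a(x) = a(z)$ for all $a \in B$, and $|a(z) - a(y)| \le \varepsilon_a$ for all $a \in B$. Then for each $a \in B$ I would substitute $a(x)$ for $a(z)$ in the second inequality, obtaining $|a(x) - a(y)| = |a(z) - a(y)| \le \varepsilon_a$. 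Since this holds for every $a \in B$, we conclude $(x,y) \in \mathrm{sim}(B)$, which establishes $\mathrm{ind}(B) \circ \mathrm{sim}(B) \subseteq \mathrm{sim}(B)$, i.e.\ $\mathrm{sim}(B)$ is $\mathrm{ind}(B)$-compatible.

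One should also note that $\mathrm{sim}(B)$ is indeed a tolerance: it is reflexive because $|a(x) - a(x)| = 0 \le \varepsilon_a$, and symmetric because $|a(x) - a(y)| = |a(y) - a(x)|$; similarly $\mathrm{ind}(B)$ is an equivalence, so the notion of $\mathrm{ind}(B)$-compatibility from Definition~\ref{Def:Compatible} applies. These observations require no real work and can be dispatched in a sentence.

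There is essentially no obstacle here: the statement is a one-line substitution argument, and the only thing to be slightly careful about is the edge case $B = \emptyset$, where $\mathrm{ind}(\emptyset) = \mathrm{sim}(\emptyset) = U \times U$ and the inclusion is trivial; alternatively one restricts to $\emptyset \ne B \subseteq A$ as was done implicitly elsewhere. I would write the proof in two or three lines mirroring Lemma~\ref{Lem:InfSyst}.
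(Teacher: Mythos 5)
Your proof is correct and follows exactly the same substitution argument as the paper: unpack $(x,y)\in\mathrm{ind}(B)\circ\mathrm{sim}(B)$ via an intermediate $z$, replace $a(z)$ by $a(x)$ in the inequality $|a(z)-a(y)|\le\varepsilon_a$, and conclude. The additional remarks on reflexivity, symmetry, and the case $B=\emptyset$ are harmless extras not present in (and not needed by) the paper's proof.
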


\begin{proof}
Suppose $(x,y) \in \mathrm{ind}(B) \circ \mathrm{sim}(B)$. Thus there exists $z \in U$ such that
$(x,z) \in \mathrm{ind}(B)$ and $(z,y) \in \mathrm{sim}(B)$. This means that for all $a \in B$,
\[ a(x) = a(z)  \qquad \text{ and } \qquad  | a(z) - a(y) | \leq \varepsilon_a. \]
This implies that $|a(x) - a(y) | \leq \varepsilon_a$ for every $a \in B$. Thus, $(x,y) \in \mathrm{sim}(B)$.
\end{proof}

\begin{example}
Let  $(U,A,\{V_a\}_{a\in A})$ be an information system and let $\emptyset \neq B \subseteq A$.
Because $(x,y) \in \textrm{ind}(B)$ if and only if $a(x) = a(y)$ for all attributes $a \in B$, then actually
every tolerance $\textrm{tol}(B)$ on $U$ defined in terms of some attributes in $B$ is $\textrm{ind}(B)$-compatible.

Namely, suppose $(x,y) \in \textrm{tol}(B)$. Then $a(x)$ and $a(y)$ are ``somehow related'' with respect to some attribute(s) $a$ of $B$.
If $(y,z) \in \textrm{ind}(B)$, then $a(y) = a(z)$ means that also $a(x)$ and $a(z)$ are analogously related.

As an example, we consider  ``graded similarity''. Let $B \subseteq A$ and $k$ be an integer such that 
$0 < k \leq |B|$. Note that here $|B|$ denotes the cardinality of set $B$, and this notation should not be confused with the notation 
of absolute value of a real number. We may set:
\[
(x,y) \in  \textrm{tol}(B) \iff \text{there is $C \subseteq B$ such that $|C| = k$ and $a(x) = a(y)$ for all $a \in C$}.
\]
This means that $x$ and $y$ have same value for $k$ attributes of $B$.
\end{example}

\begin{example}
This example demonstrates that if tolerances $T_1,T_2,T_3$ are $E$-compatible tolerances such that $T_1 \subseteq T_2 \subseteq T_3$, 
then $X_E \subseteq X \subseteq X^{T_1} \subseteq X^{T_2} \subseteq X^{T_3}$ for every $X \subseteq U$.
Here $U = \mathbb{R}^2$, and for $a \in U$, $a.x$ denotes the $x$-coordinate and
$a.y$ denotes the $y$-coordinate of $a$. Let us define the following tolerances:
\begin{align*}
T_1 & = \{ (a,b) \mid \max(|\floor{a.x} - \floor{b.x}|, | \floor{a.y} - \floor{b.y}|) \leq 1 \}; \\
T_2 & = \{ (a,b) \mid \max(|\floor{a.x} - \floor{b.x}|, | \floor{a.y} - \floor{b.y}|) \leq 3 \}; \\
T_3 & = \{ (a,b) \mid \max(|\floor{a.x} - \floor{b.x}|, | \floor{a.y} - \floor{b.y}|) \leq 6 \}.
\end{align*}
Here $\floor{x}$ denotes the ``floor'' of $x$, that is, the greatest integer less than or equal to $x$. Note that 
$\floor{-1.1} = -2$, for example. Clearly, $T_1 \subseteq T_2 \subseteq T_3$.

We define an equivalence $E$ on $U$ by
\[
E =\{ (a,b) \mid \max(| \floor{a.x} - \floor{b.x}|, | \floor{a.y} - \floor{b.y}|) = 0 \}.
\]
The tolerances $T_1$, $T_2$, $T_3$ are obviously $E$-compatible. 
A set $X$ is defined as a sphere:
\[
 	X = \{ (a,b) \mid d( (a,b), (a_0,b_0) ) = r_0 \}.
\]
Here $d \colon U \times U \to [0, \infty)$ is a distance function, $ (a_0,b_0)$ is some fixed point in $U$, 
and $r_0$ is some real-constant. In Figure~\ref{Fig:MultipleBorders}, the set $X$ is denoted by a white line, 
the central dark area is the lower approximation $X_E$, and the three grey layers of different intensity show 
the upper approximations of $X$ in terms of $T_1$, $T_2$, and $T_3$. 

\begin{figure}
\centering
\includegraphics[keepaspectratio, width=85mm]{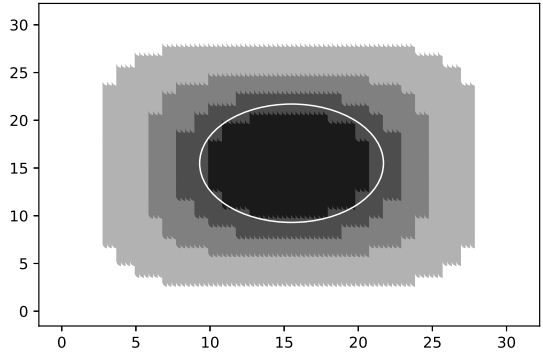}
\caption{The central dark area is the lower approximation $X_E$, and the three grey layers of different intensity show 
the upper approximations of $X$ in terms of $T_1$, $T_2$, and $T_3$.}
\label{Fig:MultipleBorders}
\end{figure}
\end{example}

\section{Lattices of rough sets based on two relations} \label{Sec:Lattices}

Let $E$ be an equivalence on $U$. We define a relation $\equiv_E$ on $\wp(U)$ by setting
\[ X \equiv_E Y \iff X_E = Y_E \quad \text{and} \quad X^E = Y^E . \]
The relation $\equiv_E$ is called \emph{rough $E$-equality} and according to Pawlak's original definition \cite{Pawl82},
the equivalence classes of $\equiv$ are called \emph{$E$-rough sets}.

Each rough set $\mathcal{R} \in \wp(U) / {\equiv_E}$ is completely defined by the pair
$(X_E,X^E)$, where $X \in \mathcal{R}$. Therefore, each $E$-rough set can be equivalently viewed as this kind of
pair, and we call the set
\[ \mathit{RS}(E) = \{ (X_E, X^E) \mid X \subseteq U \} \]
as the set of \emph{$E$-rough sets}.

A \emph{complete sublattice} of a complete lattice $L$ is a nonempty set $H \subseteq L$ such that 
$\bigvee_L S$ and $\bigwedge_L S$ belong to $H$ for every $S \subseteq H$. If $L$ and $K$ are complete lattices, 
then the \emph{Cartesian product}
\[ L \times K = \{ (a,b) \mid a \in L \text{ \ and \ } b \in K \} \]
forms a complete lattice such that
\[ \bigvee_{i \in I} (a_i,b_i ) = \Big ( \bigvee_{i \in I} a_i, \bigvee_{i \in I} b_i \Big )
 \text{ \ and \ }
   \bigwedge_{i \in I} (a_i,b_i) = \Big ( \bigwedge_{i \in I} a_i, \bigwedge_{i \in I} b_i \Big )
\]
for all $\{ (a_i,b_i) \mid i \in I \} \subseteq L \times K$. Note that the order of $L \times K$ is given by
\[ (a_1,b_1) \leq (a_2,b_2) \iff a_1 \leq_L a_2 \text{ \ and \ } b_1 \leq_K b_2 .\] 
This order is called \emph{coordinatewise order}.
 
It is known (see e.g.\@ \cite{PomPom88}) that $\mathit{RS}(E)$ is a complete sublattice of 
$\mathrm{Def}(E) \times \mathrm{Def}(E)$, that is, for any $\mathcal{H} \subseteq \wp(U)$,
\[ \bigwedge_{X \in \mathcal{H}} (X_E, X^E) 
= \Big (\bigcap_{X \in \mathcal{H}} X_E, \bigcap_{X \in \mathcal{H}}X^E \Big ) 
\qquad \text{ and } \qquad 
\bigvee_{X \in \mathcal{H}} (X_E,X^E) 
= \Big (\bigcup_{X \in \mathcal{H}} X_E, \bigcup_{X \in \mathcal{H}}X^E \Big ) . \]
Even $X_E \subseteq X^E$ for any $X \subseteq U$, not every pair $(A,B)$ such that $A,B \in \mathrm{Def}(E)$ and $A \subseteq B$
does not form a rough set. The following characterization is by P.~Pagliani \cite{Pagl97}:
\[ \mathit{RS}(E) = \{ (A,B) \in \mathrm{Def}(E)^2 \mid A \subseteq B \text{ and } \Sigma_E \subseteq A \cup B^c \},
\]
where
\[ \Sigma_E = \{ [x]_E \mid  [x]_E  = \{x\} \, \} . \]
This means that $\Sigma_E$ contains the singleton $E$-classes. 
Note that $A \cup B^c = (B \setminus A)^c$, so $(A,B) \in \mathrm{Def}(E)^2$ belongs to $\mathit{RS}(E)$
if and only if $A \subseteq B$ and $\Sigma_E \cap (B \setminus A) = \emptyset$.

On the other hand, it is known that if $T$ is a tolerance on $U$, then the set of pairs
\[ \mathit{RS}(T) = \{ (X_T, X^T) \mid X \subseteq U \} \]
ordered by coordinatewise inclusion is not in general a lattice \cite{Jarv07}. However, if $T$ is a tolerance induced by an 
irredundant covering of $U$, then $\mathit{RS}(T)$ is a complete sublattice of $\wp(U)_T \times \wp(U)^T$,
which means that for any $\mathcal{H} \subseteq \wp(U)$,
\begin{align*}
\bigwedge_{X \in \mathcal{H}} (X_T,X^T) &= \Big (\bigcap_{X \in \mathcal{H}} X_T, \Big ( \Big ( \bigcap_{X \in \mathcal{H}}X^T \Big)_T \Big )^T  \Big ) \\
\intertext{and}
\bigvee_{X \in \mathcal{H}} (X_T,X^T) &= \Big ( \Big ( \Big ( \bigcup_{X \in \mathcal{H}} X_T \Big )^T \Big )_T , \bigcup_{X \in \mathcal{H}}X^T \Big ) . 
\end{align*}

In this section, we study the structure of the pairs
\[ \mathit{RS}(E,T) = \{ (X_E, X^T) \mid X \subseteq U \}, \]
where $E$ is an equivalence and $T$ is an $E$-compatible tolerance. We start with the following theorem.

\begin{theorem} \label{Thm:CompleteLattice}
Let $E$ be an equivalence on $U$. If $T$ is an $E$-compatible tolerance, then $\mathit{RS}(E,T)$ is a 
complete lattice such that for any $\mathcal{H} \subseteq \wp(U)$,
\begin{equation} \label{Eq:join}
\bigvee_{X \in \mathcal{H}} (X_E, X^T) 
= \Big (  \bigcup_{X \in \mathcal{H}} X_E, \bigcup_{X \in \mathcal{H}} X^T \Big ) . 
\end{equation}
and
\begin{equation} \label{Eq:meet}
\bigwedge_{X\in\mathcal{H}} (X_{E},X^T) = 
\Big (  \bigcap_{X\in\mathcal{H}} X_E, 
\Big ( \Big (\bigcap_{X \in \mathcal{H}} (X^T)_T \Big) \setminus \Sigma_E(\mathcal{H}) \Big) ^T \Big ),  
\end{equation}
where 
\[ \Sigma_E(\mathcal{H}) = \Big (  \Big(  \bigcap_{X\in\mathcal{H}} ( X^T)_T \Big )  
\setminus \bigcap_{X\in\mathcal{H}} X_E   \Big ) \cap \Sigma_E.
\]
\end{theorem}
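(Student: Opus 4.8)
The plan is to realise $\mathit{RS}(E,T)$ as a join-closed subset of a complete lattice that is already understood. By Lemma~\ref{Lem:AllTheSame}, for every $X\subseteq U$ we have $X_E\in\mathrm{Def}(E)$ and $X^T\in\wp(U)^T$, and since $E\subseteq T$ also $X_E\subseteq X^E\subseteq X^T$; hence $\mathit{RS}(E,T)$ lies inside the complete lattice $L=\mathrm{Def}(E)\times\wp(U)^T$ ordered coordinatewise, whose joins are computed coordinatewise --- union in $\mathrm{Def}(E)$, and union in $\wp(U)^T$ by \eqref{Eq:UpLattice} --- and whose bottom is $(\emptyset,\emptyset)$. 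First I would show that $\mathit{RS}(E,T)$ is closed under arbitrary joins taken in $L$; since it also contains $(\emptyset,\emptyset)$, this makes $\mathit{RS}(E,T)$ a complete lattice whose joins agree with those of $L$, which is precisely \eqref{Eq:join}. Then I would obtain the meet as the greatest lower bound and simplify it to \eqref{Eq:meet}.

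For the join, fix $\mathcal{H}\subseteq\wp(U)$ and put $A=\bigcup_{X\in\mathcal{H}}X_E$ and $B=\bigcup_{X\in\mathcal{H}}X^T$, so that the $L$-join of $\{(X_E,X^T)\mid X\in\mathcal{H}\}$ is $(A,B)$; it suffices to exhibit $Y\subseteq U$ with $Y_E=A$ and $Y^T=B$. Start from $V=\bigcup_{X\in\mathcal{H}}X$, which has $V^T=B$ because $\cdot^T$ commutes with unions. The lower approximation $V_E$ may properly contain $A$, but the excess $V_E\setminus A$ is a union of non-singleton $E$-classes $C$ with $C\subseteq V$ and $C\cap A=\emptyset$: indeed, if $\{p\}\subseteq V$ is a singleton class then $p\in X$ for some $X\in\mathcal{H}$, hence $[p]_E=\{p\}\subseteq X$ and $p\in X_E\subseteq A$. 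From each such excess class delete all but one element, obtaining $Y$. Here is the point: by Proposition~\ref{Prop:CompTolerance} we have $E\subseteq\ker T$, so all elements of an $E$-class share the same $T$-neighbourhood; consequently deleting all-but-one element of a non-singleton $E$-class from a set does not change its upper approximation, whence $Y^T=V^T=B$. Moreover $A\subseteq Y$ (the deleted classes are disjoint from $A$), $A$ is $E$-definable, and every $E$-class contained in $Y$ is either non-singleton and non-excess, hence contained in $A$, or a singleton, hence again contained in $A$; so $Y_E=A$. Thus $(A,B)\in\mathit{RS}(E,T)$, and since each $(X_E,X^T)\le(A,B)$ while every upper bound $(C,D)$ of the family satisfies $A\subseteq C$ and $B\subseteq D$, the pair $(A,B)$ is the supremum, proving \eqref{Eq:join}.

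For the meet, set $A^{*}=\bigcap_{X\in\mathcal{H}}X_E$ and $D_{0}=\bigcap_{X\in\mathcal{H}}(X^T)_T=\bigl(\bigcap_{X\in\mathcal{H}}X^T\bigr)_T$ (lower approximation commutes with intersections); then $D_{0}$ is $E$-definable and $A^{*}\subseteq D_{0}$ since $X_E\subseteq X\subseteq(X^T)_T$ for each $X$. As $\mathit{RS}(E,T)$ is now known to be a complete lattice, it suffices to exhibit a lower bound of $\{(X_E,X^T)\mid X\in\mathcal{H}\}$ in $\mathit{RS}(E,T)$ that dominates every lower bound; I claim it is $P=\bigl(A^{*},(D_{0}\setminus\Sigma_E(\mathcal{H}))^T\bigr)$. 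That $P\in\mathit{RS}(E,T)$: let $Y^{*}$ be obtained from $D_{0}\setminus\Sigma_E(\mathcal{H})$ by deleting one representative from each non-singleton $E$-class contained in $D_{0}$ but disjoint from $A^{*}$; the $\ker T$-argument gives $(Y^{*})^T=(D_{0}\setminus\Sigma_E(\mathcal{H}))^T$, and as in the join case $(Y^{*})_E=A^{*}$ (using that $A^{*}\subseteq D_{0}\setminus\Sigma_E(\mathcal{H})$ is $E$-definable, that the remaining excess non-singleton classes have a representative removed, and that a singleton class inside $Y^{*}$ lies in $D_{0}\setminus\Sigma_E(\mathcal{H})$ and hence, being outside $\Sigma_E(\mathcal{H})$, in $A^{*}$). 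That $P$ is a lower bound: $A^{*}\subseteq X_E$, and $Y^{*}\subseteq D_{0}=\bigl(\bigcap_{X}X^T\bigr)_T$ gives $(Y^{*})^T\subseteq\bigcap_{X}X^T\subseteq X^T$ by the Galois properties. Finally, if $(C,D)=(W_E,W^T)$ is any lower bound, then $W_E\subseteq A^{*}$ and $W^T\subseteq\bigcap_{X}X^T$, so $W\subseteq D_{0}$; and $W$ avoids $\Sigma_E(\mathcal{H})$, for a point $p\in W\cap\Sigma_E(\mathcal{H})$ would be a singleton class in $D_{0}\setminus A^{*}$ with $p\in W_E\subseteq A^{*}$, a contradiction. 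Hence $W\subseteq D_{0}\setminus\Sigma_E(\mathcal{H})$, so $D=W^T\subseteq(D_{0}\setminus\Sigma_E(\mathcal{H}))^T$ and $(C,D)\le P$. Therefore $P=\bigwedge_{X\in\mathcal{H}}(X_E,X^T)$, which is \eqref{Eq:meet}.

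The step I expect to be the main obstacle is the singleton-class bookkeeping: recognising that the sole obstruction to a coordinatewise pair being a genuine $(E,T)$-rough set is a singleton $E$-class sitting in the core $(\,\cdot\,)_T$ of the second coordinate but not in the first coordinate --- the role played by $\Sigma_E(\mathcal{H})$, in exact analogy with Pagliani's $\Sigma_E$ in the description of $\mathit{RS}(E)$ --- and then establishing the maximality used in the meet, namely that once these rigid points are removed the upper approximation cannot be pushed any higher without enlarging the lower approximation. Every thinning argument above ultimately rests on the single fact $E\subseteq\ker T$ from Proposition~\ref{Prop:CompTolerance}.
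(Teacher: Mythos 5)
Your proposal is correct, and its overall skeleton (prove the join formula by exhibiting a witness set $Y$, then prove the meet formula by exhibiting a witness and verifying it is the greatest lower bound) coincides with the paper's; the greatest-lower-bound verification for the meet is essentially identical. Where you genuinely diverge is in how the witness sets are produced. The paper leans on the known structure theory of $\mathit{RS}(E)$: for the join it cites the fact that $\mathit{RS}(E)$ is a complete sublattice of $\mathrm{Def}(E)\times\mathrm{Def}(E)$ to get $Y$ with $Y_E=\bigcup X_E$ and $Y^E=\bigcup X^E$, and for the meet it verifies Pagliani's criterion ($A\subseteq B$ and $\Sigma_E\cap(B\setminus A)=\emptyset$) for the pair $\bigl(\bigcap X_E,\;\bigl(\bigcap(X^T)_T\bigr)\setminus\Sigma_E(\mathcal{H})\bigr)$ to conclude it lies in $\mathit{RS}(E)$; in both cases the second coordinate is then converted to a $T$-upper approximation via $Y^T=(Y^E)^T$ from Lemma~\ref{Lem:AllTheSame}. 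You instead build $Y$ and $Y^{*}$ explicitly by a thinning argument: delete elements from non-singleton $E$-classes that would otherwise inflate the lower approximation, and use $E\subseteq\ker T$ (Proposition~\ref{Prop:CompTolerance}) to see that removing some-but-not-all elements of an $E$-class leaves the $T$-upper approximation unchanged. Your bookkeeping is sound (in particular, the observation that singleton classes inside $\bigcup\mathcal{H}$ automatically land in $\bigcup X_E$, and that $\Sigma_E(\mathcal{H})$ collects exactly the obstructing singleton classes, is exactly right). What the paper's route buys is brevity, given the cited background on $\mathit{RS}(E)$; what yours buys is self-containedness --- you effectively re-prove the fragment of Pagliani-type structure theory you need, at the cost of some combinatorial care with the deletions.
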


\begin{proof}
First, we show that the right hand side of \eqref{Eq:join} belongs to $\mathit{RS}(E,T)$. 
As we noted,  
\[ \Big (   \bigcup_{X \in \mathcal{H}} X_E, \bigcup_{X \in \mathcal{H}} X^E \Big ) \]
belongs to $\mathit{RS}(E)$. This means that  there exists a set $Y \subseteq U$ with
\[ Y_{E} = \bigcup_{X\in\mathcal{H}} X_{E} \quad \text{ and } \quad Y^E = \bigcup_{X\in\mathcal{H}} X^E.\] 
Then, in view of Lemma~\ref{Lem:AllTheSame}, we have
\[ Y^T = (Y^E)^T = \Big(\bigcup_{X \in \mathcal{H}} X^E \Big )^T
= \bigcup_{X \in \mathcal{H}} ( X^E )^T = \bigcup_{X \in \mathcal{H}} X^T. \]
We have proved that
\[ \Big ( \bigcup_{X\in\mathcal{H}} X_E, \bigcup_{X\in\mathcal{H}} X^T \Big ) 
 = (Y_E, Y^T) \in RS(E, T). \]

It is clear that the right hand side of \eqref{Eq:join} is an upper bound of $\{ (X_E, X^T) \mid X \in \mathcal{H} \}$.  
Let $(Z_E, Z^T)$ be an upper bound of  $\{ (X_E, X^T) \mid X \in \mathcal{H} \}$. 
Then $\bigcup \{X_E \mid X \in \mathcal{H} \}  \subseteq Z_E$ and  $\bigcup \{ X^T \mid X \in \mathcal{H} \} 
\subseteq Z^T$ imply 
\[ \Big (  \bigcup_{X \in \mathcal{H}} X_E, \bigcup_{X \in \mathcal{H}} X^T \Big )
 \leq (Z_E, Z^T). \]
Therefore, \eqref{Eq:join} holds.

In order to show that the right side of \eqref{Eq:meet} belongs to $RS(E,T)$,
first we prove that
\[ \Big ( \bigcap_{ X \in \mathcal{H}} (X^T )_T \Big )  \setminus \Sigma_E(\mathcal{H}) \]
is $E$-definable. 

Suppose that $x \in  \bigcap \{ (X^T )_T \mid X \in \mathcal{H} \}   \setminus \Sigma_E(\mathcal{H})$ 
and $x \, E \, y$. Then $T(x) \subseteq X^T$ for all $X \in \mathcal{H}$. It is also clear that 
$y \notin \Sigma_E(\mathcal{H})$, because $y \in \Sigma_E(\mathcal{H})$ would mean $y \in \Sigma_E$, that is, 
$[y]_E = \{y\}$. Because $x \, E \, y$, we obtain $x = y$ and $x \in \Sigma_E(\mathcal{H}$), which is not 
possible by the original assumption.
We have now two possibilities: (i)~If $x \in \bigcap \{ X_E \mid X\in\mathcal{H} \}$, then 
$y \in [x]_E \subseteq X \subseteq (X^T)_T$ for all $X \in \mathcal{H}$. Therefore,
\[ y \in \Big ( \bigcap_{X\in\mathcal{H}} (X^T)_T \Big ) \setminus \Sigma_E(\mathcal{H}).\]
(ii)~If 
\[ x \in \Big ( \bigcap_{ X \in \mathcal{H}} (X^T )_T \Big ) \setminus \bigcap_{X \in \mathcal{H}} X_E , \]
then for any $z \in T(y)$, $z \, T \, y$ and $y \, E \, x$ imply $z \, T \, x$. We have $z \in T(x) \subseteq X^T$
for all $X \in \mathcal{H}$. This means $T(y) \subseteq X^T$ and $y \in (X^T)_T$ for every $X \in \mathcal{H}$. Thus,
\[ y \in \Big ( \bigcap_{X \in \mathcal{H}} (X^T)_T \Big ) \setminus \Sigma_E (\mathcal{H}). \]
Therefore, $\bigcap \{(X^T)_T \mid X \in \mathcal{H} \} \setminus \Sigma_E (\mathcal{H})$ is $E$-definable.

It is clear that $\bigcap \{ X_E \mid X \in \mathcal{H}\}$ is $E$-definable. Observe also that
\[ \bigcap_{X \in \mathcal{H}} X_E \subseteq \Big ( \bigcap_{X \in \mathcal{H}} (X^T)_T \Big ) \setminus 
\Sigma_E(\mathcal{H}).\]
Indeed, let $x \in \bigcap \{ X_E \mid X \in \mathcal{H} \}$. Then, $x \in X_E \subseteq X \subseteq (X^T)_T$ for
any $X \in \mathcal{H}$. Thus, $x \in \bigcap \{ (X^T)_T \mid X \in \mathcal{H} \}$. 
If $x \notin \Sigma_E$, then $x \notin \Sigma_E(\mathcal{H})$. 
If $x \in \Sigma_E$, then
\[ x \notin \Big ( \Big ( \bigcap_{X\in\mathcal{H}} (X^T)_T \Big ) \setminus 
        \bigcap_{X \in \mathcal{H}} X_E \Big ) \cap \Sigma_E = \Sigma_E(\mathcal{H}), \]
because $x \in \bigcap \{ X_E \mid X \in \mathcal{H} \}$. Therefore,
\[ x \in \Big ( \bigcap_{X\in\mathcal{H}} (X^T)_T \Big ) \setminus \Sigma_E(\mathcal{H}). \]

Next we observe that $\Sigma_E$ does not intersect with
\[ \Big ( \Big (\bigcap_{X \in \mathcal{H}} (X^T)_T \Big ) \setminus \Sigma_E (\mathcal{H}) \Big ) \setminus \bigcap_{X \in \mathcal{H}} X_E =
\Big ( \Big (\bigcap_{X \in \mathcal{H}} (X^T)_T \Big ) \setminus \bigcap_{X \in \mathcal{H}} X_E \Big ) \setminus \Sigma_E(\mathcal{H}), \]
because
\[ \Sigma_E(\mathcal{H}) = \Big (  \Big(  \bigcap_{X\in\mathcal{H}} ( X^T)_T \Big )  
\setminus \bigcap_{X\in\mathcal{H}} X_E  \Big ) \cap \Sigma_E.
\]
As we have noted, a pair $(A,B) \in \mathrm{Def}(E)^2$ belongs to $\mathit{RS}(E)$
if and only if $A \subseteq B$ and $\Sigma_E \cap (B \setminus A) = \emptyset$. Hence, we have now proved
that 
\[
\Big ( \bigcap_{X \in \mathcal{H}} X_E, 
\Big ( \bigcap_{X \in \mathcal{H}} (X^T)_T \Big) \setminus \Sigma_E(\mathcal{H})  \Big )
\]
belongs to $\mathit{RS}(E)$. This means that there is a set $Y \subseteq U$ with
\[ Y_E = \bigcap_{X \in \mathcal{H}} X_E \qquad \text{and} \qquad 
Y^E = \Big ( \bigcap_{X \in \mathcal{H}} (X^T)_T \Big) \setminus \Sigma_E(\mathcal{H}) .\] 
By Lemma~\ref{Lem:AllTheSame},
\[ Y^T = (Y^E)^T = \Big ( \Big ( \bigcap_{X \in \mathcal{H}} (X^T)_T \Big) \setminus \Sigma_E(\mathcal{H})  \Big )^T ,\] 
and $(Y_E,Y^T)$ belongs to $\mathit{RS}(E,T)$.

Finally, we prove \eqref{Eq:meet}. It is clear that 
\[ Y_E \subseteq \bigcap_{X \in \mathcal{H} } X_E \subseteq X_E \]
and
\[ Y^T \subseteq \Big ( \bigcap_{X \in \mathcal{H}} (X^T)_T \Big )^T 
= \Big ( \Big ( \bigcap_{X \in \mathcal{H}} X^T \Big )_T \Big )^T \subseteq ((X^T)_T)^T = X^T \]
for all $X \in \mathcal{H}$. Thus, $(Y_E,Y^T)$ is a lower bound of $\{ (X_E, X^T) \mid X \in \mathcal{H} \}$.

Suppose that $(Z_E,Z^T)$ is a lower bound of  $\{ (X_E, X^T) \mid X \in \mathcal{H} \}$. Then,
$Z_E \subseteq \bigcap \{ X_E \mid X \in \mathcal{H}\} = Y_E$ and $Z^T \subseteq \bigcap \{ X^T \mid X \in \mathcal{H} \}$.
We have
\[ Z \subseteq (Z^T)_T = \big ( \bigcap_{X \in \mathcal{H}} X^T \big )_T = \bigcap_{X \in \mathcal{H}} (X^T)_T.\]
We prove that $Z$ and $\Sigma_E(\mathcal{H})$ are disjoint. Assume by contradiction that there is $x \in Z \cap \Sigma_E(\mathcal{H})$.
Then $[x]_E = \{x\}$. Thus $x \in Z$ implies $x \in Z_E \subseteq \bigcap \{ X_E \mid X \in \mathcal{H} \}$. We get
\[ x \notin \Big ( \bigcap_{X \in \mathcal{H}} (X^T)_T \Big ) \setminus \bigcap_{X \in \mathcal{H}} X_E, \]
which means $x \notin \Sigma_E(\mathcal{H})$, a contradiction. Thus, $Z \cap \Sigma_E(\mathcal{H}) = \emptyset$. These
facts imply that 
\[ Z \subseteq \Big ( \bigcap_{X \in \mathcal{H}} (X^T)_T \Big ) \setminus \Sigma_E(\mathcal{H}) \]
and
\[ Z^T \subseteq \Big ( \Big ( \bigcap_{X \in \mathcal{H}} (X^T)_T \Big ) \setminus \Sigma_E(\mathcal{H}) \Big )^T = Y^T. \]
Thus, $(Y_E,Y^T)$ is the greatest lower bound of  $\{ (X_E, X^T) \mid X \in \mathcal{H} \}$. 
\end{proof}

By Theorem~\ref{Thm:CompleteLattice}, $\mathit{RS}(E,T)$ is always a complete join-sublattice of $\wp(U)_E \times \wp(U)^T$.
It is also obvious that if $\Sigma_E(\mathcal{H}) = \emptyset$ for all $\mathcal{H} \subseteq \wp(U)$, then $\mathit{RS}(E,T)$ is a complete 
meet-sublattice of $\wp(U)_E \times \wp(U)^T$. Therefore, $\mathit{RS}(E,T)$ is a complete sublattice of 
the Cartesian product $\wp(U)_E \times \wp(U)^T$ whenever $\Sigma_E(\mathcal{H}) = \emptyset$ for all $\mathcal{H} \subseteq \wp(U)$.
On the other hand, $\mathit{RS}(E,T)$ may be a complete sublattice of the Cartesian product $\wp(U)_E \times \wp(U)^T$ even
there is  $\mathcal{H} \subseteq \wp(U)$ such that $\Sigma_E(\mathcal{H}) \neq \emptyset$ (see Section~\ref{Sec:FurtherProperties}).

Let us denote 
\[ \Sigma_T = \{ T(x) \mid T(x) = \{x\} \} .\]
Because $T$ is $E$-compatible, we have $E \subseteq T$ and $\Sigma_T \subseteq \Sigma_E$.  We can write the following condition.

\begin{lemma} \label{Lem:CompleteSublattice}
Let $E$ be an equivalence on $U$ and let $T$ be an $E$-compatible
tolerance. If $\Sigma_E \subseteq \Sigma_T$, then $\mathit{RS}(E,T)$ is a complete sublattice of the Cartesian product
$\wp(U)_E \times \wp(U)^T$. 
\end{lemma}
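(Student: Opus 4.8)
The plan is to lean on the remark immediately following Theorem~\ref{Thm:CompleteLattice}: $\mathit{RS}(E,T)$ is a complete sublattice of $\wp(U)_E \times \wp(U)^T$ as soon as $\Sigma_E(\mathcal{H}) = \emptyset$ for every $\mathcal{H} \subseteq \wp(U)$. Indeed, by \eqref{Eq:join} the join is always the coordinatewise union, and when $\Sigma_E(\mathcal{H})$ vanishes the meet \eqref{Eq:meet} collapses to $\big(\bigcap_{X\in\mathcal{H}}X_E,\,(\bigcap_{X\in\mathcal{H}}(X^T)_T)^T\big)$, which is exactly the meet of the product, since $\bigcap_{X\in\mathcal{H}}(X^T)_T = (\bigcap_{X\in\mathcal{H}}X^T)_T$ and meets in $\wp(U)^T$ are given by \eqref{Eq:UpLattice}. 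So the whole task reduces to showing that the hypothesis $\Sigma_E \subseteq \Sigma_T$ forces $\Sigma_E(\mathcal{H}) = \emptyset$.

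To see this I would argue by contradiction: suppose some $x$ lies in
\[ \Sigma_E(\mathcal{H}) = \Big(\Big(\bigcap_{X\in\mathcal{H}}(X^T)_T\Big) \setminus \bigcap_{X\in\mathcal{H}}X_E\Big) \cap \Sigma_E . \]
From $x \in \Sigma_E$ we get $[x]_E = \{x\}$, and $\Sigma_E \subseteq \Sigma_T$ then upgrades this to $T(x) = \{x\}$. From $x \in \bigcap_{X\in\mathcal{H}}(X^T)_T$ we get $T(x) \subseteq X^T$, hence $x \in X^T$, for every $X \in \mathcal{H}$; since $X^T = \{y \mid T(y) \cap X \neq \emptyset\}$ and $T(x) = \{x\}$, this forces $x \in X$, and as $[x]_E = \{x\} \subseteq X$ it even gives $x \in X_E$. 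As $X$ ranged over all of $\mathcal{H}$, we obtain $x \in \bigcap_{X\in\mathcal{H}}X_E$, contradicting the factor $x \notin \bigcap_{X\in\mathcal{H}}X_E$ in the definition of $\Sigma_E(\mathcal{H})$. Hence $\Sigma_E(\mathcal{H}) = \emptyset$, and the lemma follows.

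I do not expect a genuine obstacle here: the argument is a direct unwinding of the definitions, the one substantive point being that $\Sigma_E \subseteq \Sigma_T$ makes every ``doubtful'' singleton $E$-class also $T$-isolated, and a $T$-isolated point can belong to $X^T$ only by already belonging to $X$ (equivalently, to $X_E$). The sole step that needs a moment's care is the bookkeeping check that the collapsed form of \eqref{Eq:meet} coincides with the coordinatewise meet of $\wp(U)_E \times \wp(U)^T$, which is precisely where \eqref{Eq:UpLattice} and the identity $\bigcap_{X\in\mathcal{H}}(X^T)_T = (\bigcap_{X\in\mathcal{H}}X^T)_T$ are used.
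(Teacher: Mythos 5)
Your proof is correct and follows essentially the same route as the paper: the paper's own proof likewise reduces the claim to showing $\Sigma_E(\mathcal{H})=\emptyset$ for every $\mathcal{H}$ (invoking the remark after Theorem~\ref{Thm:CompleteLattice}) and then derives the same contradiction from $T(a)=\{a\}$ and $[a]_E=\{a\}$. Your extra bookkeeping check that the collapsed meet agrees with the coordinatewise meet of $\wp(U)_E\times\wp(U)^T$ is exactly the step the paper treats as obvious, and it is verified correctly.
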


\begin{proof}
It is enough to prove that for any $\mathcal{H} \subseteq \wp(U)$,  
\[ \Sigma_E(\mathcal{H}) = \Big (  \Big(  \bigcap_{X\in\mathcal{H}} ( X^T)_T \Big )  
\setminus \bigcap_{X\in\mathcal{H}} X_E   \Big ) \cap \Sigma_E
\]
is empty. Suppose that $a \in \Sigma_E(\mathcal{H})$ for some $\mathcal{H} \subseteq \wp(U)$. This means that 
$a \in \Sigma_E \subseteq \Sigma_T$.  Thus, $T(a) = \{a\}$ and $[a]_E = \{a\}$. Therefore, 
$a \in \bigcap \{ (X^T)_T \mid X \in \mathcal{H}\}$ yields that
$a \in X$ for all $X \in \mathcal{H}$ and $a \notin \bigcap \{ X_E \mid X \in \mathcal{H} \}$ gives that $a \notin X$ for
some $X \in \mathcal{H}$. Because these are contradicting, we have $\Sigma_E(\mathcal{H}) = \emptyset$.
\end{proof}

\begin{remark} \label{Rem:Alexandrov}
An element $x$ of a complete lattice $L$ is said to be \emph{compact} if, for every $S \subseteq L$, 
\[ x \leq \bigvee S \Longrightarrow x \leq \bigvee F \text{ for some finite subset $F$ of $S$}. \]
A complete lattice $L$ is said to be \emph{algebraic} if its each element can be represented as
a join of compact elements below it. It is well known that if $L$ is an algebraic lattice, 
then each complete sublattice of $L$ is algebraic. Similarly,
if $L$ and $K$ are algebraic lattices, then their Cartesian product $L \times K$ is algebraic.

A complete lattice $L$ is \emph{completely distributive} if for any doubly indexed
subset $\{x_{i,\,j}\}_{i \in I, \, j \in J}$ of $L$, 
\[
\bigwedge_{i \in I} \Big ( \bigvee_{j \in J} x_{i,\,j} \Big ) = 
\bigvee_{ f \colon I \to J} \Big ( \bigwedge_{i \in I} x_{i, \, f(i) } \Big ), \]
that is, any meet of joins may be converted into the join of all
possible elements obtained by taking the meet over $i \in I$ of
elements $x_{i,\,k}$\/, where $k$ depends on $i$. As in the case of algebraic lattices, any
complete sublattice of a completely distributive lattice is completely distributive. 
In addition, the Cartesian product of completely distributive lattices is completely distributive.

We have proved in \cite{JarRad14} that $\wp(U)_T$ and $\wp(U)^T$ are completely distributive and algebraic if and only if
$T$ is a tolerance induced by an irredundant covering. This means if $T$ is a tolerance induced by an irredundant
covering, then $\wp(U)_E \times \wp(U)^T$ is algebraic and completely distributive. 
Let $T$ be a tolerance induced by an irredundant covering of $U$ and let $E$ be an equivalence on $U$ such that
$E \subseteq \ker T$, that is, $T$ is $E$-compatible. We conclude that if $\mathit{RS}(E,T)$ is a complete sublattice 
of the Cartesian product $\wp(U)_E \times \wp(U)^T$, then $\mathit{RS}(E,T)$ is algebraic and completely distributive.

This has particular interest, because it is known that a complete lattice $L$ is isomorphic to an Alexandrov topology
if and only if $L$ is algebraic and completely distributive (see \cite[Remark 2.1.]{JarRad18}, for instance). 
An \emph{Alexandrov topology} is a topology in which the intersection of any family of open sets is open. In any topology  
the intersection of any finite family of open sets is open, but in Alexandrov topologies the restriction of finiteness
is omitted.
\end{remark}

\begin{example} \label{Exa:NonLattice}
Let $T$ a tolerance on $U =  \{1,2,3,4\}$ such that
\begin{center}
 $T(1) = \{1,2,3\}$, \quad $T(2) = \{1,2,4\}$, \quad $T(3) = \{1,3,4\}$, \quad $T(4) = \{2,3,4\}$. 
\end{center}
The kernel of $T$ is $U / \ker T = \{ \{1\}, \{2\}, \{3\}, \{4\} \}$. 
Let $E$ be an equivalence on $U$ such that $U/E = \{\{1,2\},\{3\},\{4\}\}$. Now $E$ is included in $T$,
but $T$ is not $E$-compatible, because $E \nsubseteq \ker T$.

The approximations are given in Table~\ref{Table:Approximations1}.
Note that in Table~\ref{Table:Approximations1}, sets in approximation pairs are denoted simply just as sequences of letters. 
For example, $\{1,2,4\}$ is denoted by $124$.

\begin{table}
{\small
\centering  
\begin{tabular}{ p{10mm} p{18mm} | p{15mm} p{20mm}}
   $X$       & $(X_E,X^T)$ & 
   $X$       & $(X_E,X^T)$ \\ \hline

$\emptyset$  & $(\emptyset,\emptyset)$  & $\{2,3\}$    & $(3,U)$   \\
$\{1\}$      & $(\emptyset,123)$        & $\{2,4\}$    & $(4,U)$   \\
$\{2\}$      & $(\emptyset,124)$        & $\{3,4\}$    & $(34,U)$  \\
$\{3\}$      & $(\emptyset,134)$        & $\{1,2,3\}$  & $(123,U)$ \\
$\{4\}$      & $(\emptyset,234)$        & $\{1,2,4\}$  & $(124,U)$ \\
$\{1,2\}$    & $(12,U)$                 & $\{1,3,4\}$  & $(34,U)$  \\
$\{1,3\}$    & $(3,134)$                & $\{2,3,4\}$  & $(34,U)$  \\
$\{1,4\}$    & $(4,U)$                  & $U$          & $(U,U)$   \\ \hline
\end{tabular} 
\caption{\label{Table:Approximations1} Approximations based on $E$ and $T$ of Example~\ref{Exa:NonLattice}}
}
\end{table}

The ordered set $\mathit{RS}(E,T)$ is given in Figure~\ref{Fig:NonLattice}. It is
not a join-semilattice, because the elements $(\emptyset,123)$ and  $(\emptyset,124)$
have minimal upper bounds $(3,U)$, $(12,U)$, and $(4,U)$, but not a smallest upper bound. Similarly, this ordered set is
not a meet-semilattice, because, for example, $(3,U)$ and $(4,U)$ have the maximal lower bounds $(\emptyset,123)$ and  
$(\emptyset,124)$, but not a biggest one. This example then shows that if $T$ is not an $E$-compatible tolerance,
$\textit{RS}(E,T)$ is not necessarily a semilattice.

\begin{figure}[h]
\centering
\includegraphics[width=80mm]{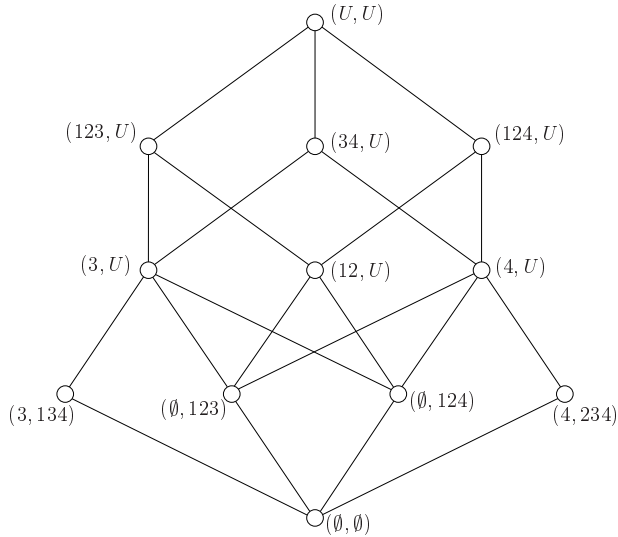}
\caption{The ordered set $\mathit{RS}(E,T)$ of Example~\ref{Exa:NonLattice} is not a semilattice. 
The elements $(\emptyset,123)$ and $(\emptyset,124)$ do not have a smallest upper bound, and $(3,U)$ and $(4,U)$ 
have no greatest lower bound.}
\label{Fig:NonLattice} 
\end{figure}
\end{example}

\begin{example} \label{Example:NonDistributiveLattice}
We denote by $\mathcal{H}$ the irredundant covering $\{ \{1,2,3\}, \{1,2,4\} \}$
of $U$. Let $T$ be the tolerance induced by $\mathcal{H}$. We have that
$T(1) = T(2) = U$, $T(3) = \{1,2,3\}$ and $T(4) = \{1,2,4\}$. The kernel of $T$ is the 
equivalence $E$ of Example~\ref{Exa:NonLattice}. Thus, the tolerance $T$ is
$E$-compatible.

\begin{table}[t]
{\small
\centering  
\begin{tabular}{ p{10mm} p{18mm} | p{15mm} p{20mm}}
   $X$       & $(X_E,X^T)$ & 
   $X$       & $(X_E,X^T)$ \\ \hline

$\emptyset$  & $(\emptyset,\emptyset)$  & $\{2,3\}$    & $(3,U)$   \\
$\{1\}$      & $(\emptyset,U)$          & $\{2,4\}$    & $(4,U)$   \\
$\{2\}$      & $(\emptyset,U)$          & $\{3,4\}$    & $(34,U)$  \\
$\{3\}$      & $(3,123)$                & $\{1,2,3\}$  & $(123,U)$ \\
$\{4\}$      & $(4,124)$                & $\{1,2,4\}$  & $(124,U)$ \\
$\{1,2\}$    & $(12,U)$                 & $\{1,3,4\}$  & $(34,U)$  \\
$\{1,3\}$    & $(3,U)$                  & $\{2,3,4\}$  & $(34,U)$  \\
$\{1,4\}$    & $(4,U)$                  & $U$          & $(U,U)$   \\ \hline
\end{tabular} 
\caption{\label{Table:Approximations2} Approximations based on $E$ and $T$ of Example~\ref{Example:NonDistributiveLattice}}
}
\end{table}

The approximations are given in Table~\ref{Table:Approximations2}
and the lattice $\textit{RS}(E,T)$ can be found in Figure~\ref{Fig:NonDistributive}.
This lattice  is not distributive, because 
\[(3,123) \vee ((3,U) \wedge (4,124)) = (3,123) \vee (\emptyset,\emptyset) = (3,123), \]
but 
\[ ( (3,123) \vee (3,U)) \wedge ( (3,123) \vee (4,124) ) = (3,U) \wedge (34,U) = (3,U). \]

\begin{figure}[ht]
\centering
\includegraphics[width=60mm]{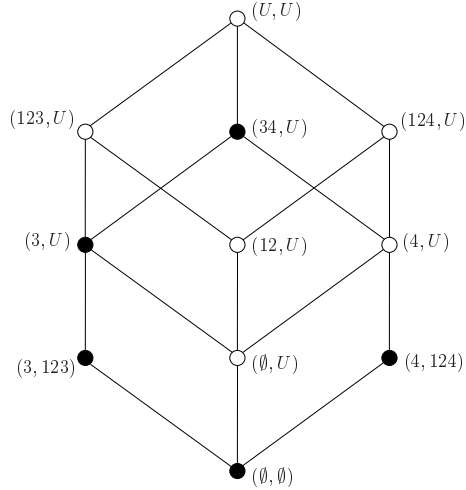}
\caption{The lattice $\mathit{RS}(E,T)$ of Example~\ref{Example:NonDistributiveLattice} is not
distributive, because it contains the pentagon $N_5$, marked by filled circles, as a sublattice.}
\label{Fig:NonDistributive}
\end{figure}
\end{example}

\section{Further properties of $\mathit{RS}(E,T)$} \label{Sec:FurtherProperties}

In case $T$ is a tolerance induced by an irredundant covering of $U$ and $E$ is an equivalence on $U$ such that $T$ is
$E$-compatible, we may present stronger lattice-theoretical results as in the previous section. As we already noted, 
it is proved in \cite{JarRad14} that if $T$ is a tolerance 
induced by an irredundant covering of $U$, then $\wp(U)_T$ and $\wp(U)^T$ are algebraic and completely distributive lattices. 
Since the Cartesian product of completely distributive and algebraic lattices is completely distributive and algebraic, $\wp(U)_E \times \wp(U)^T$ is 
completely distributive and algebraic whenever $T$ is a tolerance induced by an irredundant covering of $U$ and  
$E$ is an equivalence on $U$. Therefore, finding a condition under which $\mathit{RS}(E,T)$ is a complete sublattice of 
$\wp(U)_E \times \wp(U)^T$ would be important, because then we can show that $\mathit{RS}(E,T)$ has several further properties. 

Recall from Lemma~\ref{Lem:CompleteSublattice} that if $\Sigma_E \subseteq \Sigma_T$, $\mathit{RS}(E,T)$ is a complete 
sublattice of $\wp(U)_E \times \wp(U)^T$. Our following theorem characterizes when $\mathit{RS}(E,T)$ is a complete sublattice of 
$\wp(U)_E \times \wp(U)^T$ in terms of $\Sigma_E$ and $\Sigma_T$.

\begin{theorem} \label{Thm:Distributivity}
Let $T$ be a tolerance induced by an irredundant covering of $U$ and let $E$ be an equivalence on $U$ such that $T$ is $E$-compatible. 
Then the following are equivalent:
\begin{enumerate}[\rm (a)]
\item $\mathit{RS}(E,T)$ is a complete sublattice of $\wp(U)_E \times \wp(U)^T$.
\item For each $x \in \Sigma_E \setminus \Sigma_T$, there exists an element $y \notin \Sigma_E$ with $T(y) \subseteq T(x)$.
\end{enumerate}
\end{theorem}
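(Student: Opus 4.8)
The plan is to reduce the statement to a property of meets. By Theorem~\ref{Thm:CompleteLattice} and the discussion following it, $\mathit{RS}(E,T)$ is always a complete join-sublattice of $L:=\wp(U)_E\times\wp(U)^T$, and for every $\mathcal H\subseteq\wp(U)$ the meet $\bigwedge_{X\in\mathcal H}(X_E,X^T)$ computed in $\mathit{RS}(E,T)$ and the one computed in $L$ share their first coordinate $\bigcap_{X\in\mathcal H}X_E$. Writing $V:=\bigcap_{X\in\mathcal H}(X^T)_T$, formula~\eqref{Eq:UpLattice} together with the fact that lower approximation commutes with intersections shows that the second coordinate of the $L$-meet is $V^T$, while by~\eqref{Eq:meet} the second coordinate of the $\mathit{RS}(E,T)$-meet is $(V\setminus\Sigma_E(\mathcal H))^T$, where $\Sigma_E(\mathcal H)\subseteq V$. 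Hence (a) is equivalent to the assertion that $(V\setminus\Sigma_E(\mathcal H))^T=V^T$ for every $\mathcal H\subseteq\wp(U)$; since $(V\setminus\Sigma_E(\mathcal H))^T\subseteq V^T$ always holds, only the reverse inclusion is at stake.

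For (b)$\Rightarrow$(a) I would fix $\mathcal H$, put $S:=\Sigma_E(\mathcal H)$, and first show that no member $B$ of the irredundant covering inducing $T$ satisfies $\emptyset\neq B\cap V\subseteq S$. If such a $B$ existed, pick an element $e\in B$ belonging to no other member of the covering, so $T(e)=B$; choosing $a\in B\cap V$ and using that $B$ is a $T$-preblock gives $B\subseteq T(a)\subseteq X^T$ for all $X\in\mathcal H$, whence $T(e)=B\subseteq X^T$ for all $X$ and $e\in V$, so $e\in B\cap V\subseteq S\subseteq\Sigma_E$ and $[e]_E=\{e\}$. If $T(e)=\{e\}$, then Lemma~\ref{Lem:UnionOfClasses}(a) together with $[e]_E=\{e\}$ forces $e\in X$ for every $X\in\mathcal H$, hence $e\in\bigcap_X X_E$, contradicting $e\notin\bigcap_X X_E$ (which holds because $e\in\Sigma_E(\mathcal H)$); therefore $e\in\Sigma_E\setminus\Sigma_T$. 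Now (b) yields $y\notin\Sigma_E$ with $T(y)\subseteq T(e)=B$, so $y\in T(y)\subseteq B$ and $T(y)\subseteq B\subseteq X^T$ for all $X$, giving $y\in B\cap V\subseteq S\subseteq\Sigma_E$, a contradiction. With this claim in hand, if $z\in V^T$ then by Lemma~\ref{Lem:Covering} $z$ lies in some covering member $B$ with $B\cap V\neq\emptyset$; by the claim $B\cap V\not\subseteq S$, so any $v\in(B\cap V)\setminus S$ satisfies $v\in T(z)\cap(V\setminus S)$ and thus $z\in(V\setminus S)^T$. Hence $V^T=(V\setminus S)^T$ for every $\mathcal H$, which is (a).

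For $\neg$(b)$\Rightarrow\neg$(a) I would take $x_0\in\Sigma_E\setminus\Sigma_T$ with $T(y)\not\subseteq T(x_0)$ for every $y\notin\Sigma_E$; equivalently $V_0:=(T(x_0))_T=\{y:T(y)\subseteq T(x_0)\}\subseteq\Sigma_E$, with $x_0\in V_0$ and $T(x_0)\setminus\{x_0\}\neq\emptyset$. Put $\mathcal H:=\{\{x_0\},\,T(x_0)\setminus\{x_0\}\}$. Then $\{x_0\}^T=T(x_0)$, $\{x_0\}_E=\{x_0\}$, and $x_0\notin(T(x_0)\setminus\{x_0\})_E$, so $\bigcap_{X\in\mathcal H}X_E=\emptyset$; moreover $T(x_0)\subseteq(T(x_0)\setminus\{x_0\})^T$ (each $u\in T(x_0)\setminus\{x_0\}$ lies in $T(u)\cap(T(x_0)\setminus\{x_0\})$, and $x_0\in T(u)$ for every such $u$), so $V:=\bigcap_{X\in\mathcal H}(X^T)_T=V_0$ and $\Sigma_E(\mathcal H)=(V_0\setminus\emptyset)\cap\Sigma_E=V_0$. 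By~\eqref{Eq:meet} the meet of these two elements in $\mathit{RS}(E,T)$ is $(\emptyset,(V_0\setminus V_0)^T)=(\emptyset,\emptyset)$, whereas in $L$ it is $(\emptyset,V_0^T)=(\emptyset,T(x_0))$ by the Galois identity $((X^T)_T)^T=X^T$. Since $T(x_0)\neq\emptyset$, the two meets differ, so $\mathit{RS}(E,T)$ is not closed under meets in $L$, i.e., (a) fails.

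The delicate point is the claim inside (b)$\Rightarrow$(a): from an offending set $B\cap V\subseteq S$ one must produce an element that simultaneously witnesses the conclusion of (b) and lies inside the \emph{single} covering block $B$. Passing from an arbitrary $a\in B\cap V$ to an element $e$ belonging to $B$ and to no other block (so that $T(e)=B$) is exactly what forces the containment $T(y)\subseteq T(e)$ supplied by (b) to place $y$ back into $B$, closing the argument; the remaining steps are routine manipulations with the Galois connection and the covering description of $V^T$.
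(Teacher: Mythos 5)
Your argument is correct. The opening reduction of (a) to the identity $(V\setminus\Sigma_E(\mathcal H))^T=V^T$ for all $\mathcal H$ is the same pivot the paper uses, and your proof of (b)$\Rightarrow$(a) is essentially the paper's argument reorganized: where the paper, for a given $z\in V^T$, picks a block $T(b)$ containing $z$ and a point of $V$ and then case-splits on whether $b\in\Sigma_E(\mathcal H)$, you first establish the global claim that no covering member meeting $V$ has its intersection with $V$ swallowed by $\Sigma_E(\mathcal H)$ --- using irredundancy to manufacture the element $e$ with $T(e)=B$ --- and then read the inclusion $V^T\subseteq(V\setminus\Sigma_E(\mathcal H))^T$ off Lemma~\ref{Lem:Covering}; since the covering members are exactly the block neighbourhoods, this is the same mechanism with different bookkeeping. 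The genuine divergence is in the other direction. The paper proves (a)$\Rightarrow$(b) directly: it forms the meet of the (possibly infinite) family $\{(\{b\}_E,\{b\}^T)\mid b\in B\}$ over a block $B$ containing $x$, computes the $L$-meet to be $(\emptyset,B)$, and extracts the witness $y$ from a set $Y$ representing that pair (thereby also showing $T(y)$ is a block). You instead prove the contrapositive with the two-element family $\{\{x_0\},\,T(x_0)\setminus\{x_0\}\}$, verifying $V=V_0=(T(x_0))_T=\Sigma_E(\mathcal H)$ so that the $\mathit{RS}(E,T)$-meet $(\emptyset,\emptyset)$ disagrees with the $L$-meet $(\emptyset,T(x_0))$; all the individual identities ($\{x_0\}^T=T(x_0)$, $\bigcap_{X\in\mathcal H}X_E=\emptyset$, $T(x_0)\subseteq(T(x_0)\setminus\{x_0\})^T$) check out. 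Your version is more economical and yields the slightly sharper conclusion that failure of (b) already destroys closure under binary meets, while the paper's construction has the side benefit of explicitly exhibiting the witness $y$ demanded by (b). Both routes are sound.
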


\begin{proof}
(a)$\Rightarrow$(b):
Let $x \in \Sigma_E \setminus \Sigma_T$. Then $[x]_E = \{x\}$ and $T(x)$ has at least two elements. 
This means that there is $z \neq x$ such that $x \, T \, z$.
Because $T$ is induced by an irredundant covering $\mathcal{C}$, there is $B \in \mathcal{C}$ such that $\{x,z\} \subseteq B$.
For any $b \in B$, there is an $(E,T)$-rough set $(\{b\}_E,\{b\}^T)$. Let us assume that $\mathit{RS}(E,T)$ is a complete 
sublattice of $\wp(U)_E \times \wp(U)^T$. Then 
\[ \bigwedge_{b \in B} ( \{b\}_E,\{b\}^T) = \Big (\bigcap_{b \in B} \{b\}_E, \Big ( \Big ( \bigcap_{b \in B} \{b\}^T \Big )_T \Big )^T \Big ).\]
We have that 
\begin{center}
$\{x\}_E = \emptyset$ \quad or \quad $\{x\}_E = \{x\}$, 
\end{center}
and 
\begin{center}
$\{z\}_E = \emptyset$ \quad or \quad $\{z\}_E = \{z\}$.
\end{center}
Because $x \neq z$, we get $\bigcap \{ \{b\}_E \mid b \in B\} = \emptyset$.

By Section~\ref{SubSec:Coverings}, the know that $\mathcal{C} = \{ T(x) \mid \text{$T(x)$ is a block} \}$. Therefore, there exists
and element $c \in B$ such that $T(c) = \{c\}^T = B$. By \eqref{Eq:Block}, $B = \bigcap \{ T(b) \mid b \in B\} = \bigcap \{ \{b\}^T \mid b \in B\}$,
which yields
\[  \Big ( \Big ( \bigcap_{b \in B} \{b\}^T \Big )_T \Big )^T = (T(c)_T)^T = ((\{c\}^T)_T)^T = \{c\}^T =  T(c) = B .\]
We have that 
\[ \bigwedge_{b \in B} ( \{b\}_E,\{b\}^T) = (\emptyset,B) \]
belongs to  $\mathit{RS}(E,T)$. This means that there is a set $Y \subseteq U$ such that $Y_E = \emptyset$ and $Y^T = B$.
Obviously, $Y = \emptyset$ would imply $Y^T = \emptyset$, so necessarily $Y \neq \emptyset$. Thus, there is
$y \in Y$ such that $\{y\}_E \subseteq Y_E = \emptyset$ and $y \in T(y) = \{y\}^T \subseteq Y^T = B$. 
Because $y \in B$, we have also $B \subseteq T(y)$. This means that $T(y) = B$ is a block.
Now $\{y\}_E = \emptyset$ means that $[y]_E \neq \{y\}$, that is, $y \notin \Sigma_E$. Because $x \in B = T(y)$ and 
$T(y)$ is a block, we have $T(y) \subseteq T(x)$. 

\noindent%
(b)$\Rightarrow$(a):
By Theorem~\ref{Thm:CompleteLattice}  $\mathit{RS}(E,T)$ is a complete join-sublattice  of
$\wp(U)_E \times \wp(U)^T$. By the same theorem, to prove that $\mathit{RS}(E,T)$ is a complete sublattice  of
$\wp(U)_E \times \wp(U)^T$, we have to show that for any $\mathcal{H} \subseteq \wp(U)$,
\begin{equation}\label{Eq:HaveTo}
 \Big ( \Big (\bigcap_{X \in \mathcal{H}} (X^T)_T \Big) \setminus \Sigma_E(\mathcal{H}) \Big) ^T  =
 \Big (\bigcap_{X \in \mathcal{H}} (X^T)_T \Big) ^T  .
\end{equation}
Let $\mathcal{H} \subseteq \wp(U)$. Because the left side of \eqref{Eq:HaveTo} is always included in its right side of, it is enough to prove that 
\[
 \Big (\bigcap_{X \in \mathcal{H}} (X^T)_T \Big) ^T \subseteq
 \Big ( \Big (\bigcap_{X \in \mathcal{H}} (X^T)_T \Big) \setminus \Sigma_E(\mathcal{H}) \Big) ^T .
\] 
Let $x \in \big ( \bigcap \{ (X^T)_T \mid X \in \mathcal{H} \} \big )^T$. This means that there is 
$a \in  \bigcap \{ (X^T)_T \mid X \in \mathcal{H} \} = \big ( \bigcap \{ X^T \mid X \in \mathcal{H} \} \big )_T$ with $x \in T(a)$. 
As $x \, T \, a$, there is $b \in U$ such that $T(b)$ is a block and $x,a \in T(b)$. We have
$x \in T(b) \subseteq T(a) \subseteq \bigcap \{ X^T \mid X \in \mathcal{H} \}$.

If $b \notin \Sigma_E(\mathcal{H})$, then trivially 
$b \notin \big ( \bigcap \{ (X^T)_T \mid X \in \mathcal{H} \} \setminus \Sigma_E(\mathcal{H}) \big )$
and $x \in \big (\bigcap \{ (X^T)_T \mid X \in \mathcal{H} \} \setminus \Sigma_E(\mathcal{H}) \big )^T$. 
Assume that $b \in \Sigma_E(H)$. Let us recall that
\[ \Sigma_E(\mathcal{H}) = \Big (  \Big(  \bigcap_{X\in\mathcal{H}} ( X^T)_T \Big )  
\setminus \bigcap_{X\in\mathcal{H}} X_E   \Big ) \cap \Sigma_E.
\]
Then $[b]_E = \{b\}$ and $b \in \bigcap \{(X^T)_T \mid X \in \mathcal{H} \} = 
(\bigcap \{X^T \mid X \in \mathcal{H} \})_T$. This yields 
$T(b) \subseteq \bigcap \{ X^T \mid X \in \mathcal{H} \}$. In addition, we have $b \notin \bigcap \{ X_E \mid X \in \mathcal{H} \}$.
Observe also that for all $X \in \mathcal{H}$, $b \in X^T$, that is, $T(b) \cap X \neq \emptyset$.
This implies that $T(b) = \{b\}$ is not possible, because it would imply that $b \in X$ for all $X \in \mathcal{H}$. Since $[b]_E = \{b\}$,
we would have that $b \in X_E$ for all $X \in \mathcal{H}$, and further $x \in \bigcap \{ X_E \mid X \in \mathcal{H} \}$, which is not allowed.
Hence, $T(b) \neq \{b\}$ and $b \in \Sigma_E \setminus \Sigma_T$. By our assumption, there exists an element $y$ such that $T(y) \subseteq T(b)$
and $[y]_E \neq \{y\}$. Now $T(y) \subseteq T(b) \subseteq \bigcap \{ X^T \mid X \in \mathcal{H} \}$ implies
$y \in (\bigcap \{ X^T \mid X \in \mathcal{H} \})_T = \bigcap \{ (X^T)_T \mid X \in \mathcal{H} \}$. 
Because $y \notin \Sigma_E$, $y \notin \Sigma_E(\mathcal{H})$ holds also. We have showed that 
$y \in \bigcap \{ (X^T)_T \mid X \in \mathcal{H} \} \setminus \Sigma_E(\mathcal{H})$.
Because $y \in T(b)$ and $T(b)$ is a block, we have $x \in T(b) \subseteq T(y)$ and thus
\[ x \in \Big ( \Big (\bigcap_{X \in \mathcal{H}} (X^T)_T \Big) \setminus \Sigma_E(\mathcal{H}) \Big) ^T ,\]
which completes the proof.
\end{proof}

\begin{remark}
In condition (b) of Theorem~\ref{Thm:Distributivity}, any $y \notin \Sigma_E$ with $T(y) \subseteq T(x)$ is such that $T(y)$ is a
block. In ``(a)$\Rightarrow$(b) part'' of the proof, it is directly showed that $T(y)$ is a block.
In ``(b)$\Rightarrow$(a) part'', we showed that $T(y) \subseteq T(b)$, and $T(b) \subseteq T(y)$ holds by assumption.
Thus, $T(y) = T(b)$, and we also have that $T(b)$ is a block.

What also is interesting is that there need to be \emph{two} such elements. Namely, $y \notin \Sigma_E$
means that $[y]_E \neq \{y\}$. So, there is an element $z \neq y$ such that $y \, E \, z$. Because $E$ is $T$-compatible, we have that
$(y,z) \in \ker T$, that is, $T(y) = T(z)$. Hence, $z$ has all the same properties as $y$ has.
\end{remark}

\begin{example}
Suppose that $U = \{1,2,3,4,5,6\}$. Then
\[ \mathcal{C} = \{ \{1,2,3,4\}, \{3,4,5,6\}\}\] 
is an irredundant covering of $U$,
\begin{itemize}
 \item $T(1) = T(2) =  \{1,2,3,4\}$,
 \item $T(3) = T(4) = U$, and
 \item $T(5) = T(6) = \{3,4,5,6\}$.
\end{itemize}
We have that 
\[ U / \ker T =  \{ \{1,2\}, \{3,4\}, \{5,6\} \}.\]
Let $E$ be an equivalence on $U$ such that
\[ U/E =  \{ \{1,2\}, \{3\}, \{4\}, \{5,6\} \} .\]
Because $E \subseteq \ker T$, the tolerance $T$ is $E$-compatible.

Now  $\Sigma_T = \emptyset$ and $\Sigma_E = \{3,4\}$. Thus, $\Sigma_E \setminus \Sigma_T = \{3,4\}$. Because 
$T(3) = T(4) = U$, condition (b) of Theorem~\ref{Thm:Distributivity} is trivially true for any 
$y \in U \setminus \Sigma_E = \{1,2,5,6\}$. Note also that for any element $y \in  \{1,2,5,6\}$,
$T(y)$ is a block.
\end{example}

However, we may present even such a condition concerning only the elements whose $T$-neighbourhood is a block. 
Let us define the following condition:
\begin{tabbing}\qquad\qquad\= \\
(CSub) \>For each $x \in \Sigma_E \setminus \Sigma_T$ such that $T(x)$ is a block, there exists an element  $y \notin \Sigma_E$ with $T(y) = T(x)$.
\end{tabbing}
Note that  if $\Sigma_E \subseteq \Sigma_T$, then $\Sigma_E \setminus \Sigma_T$ is empty and (CSub) holds trivially.

\begin{lemma} \label{Lem:SimpleCondition}
Let $T$ be a tolerance induced by an irredundant covering of $U$ and let $E$ be an equivalence on $U$ such that $T$ is $E$-compatible. 
Then $\mathit{RS}(E,T)$ is a complete sublattice of $\wp(U)_E \times \wp(U)^T$ if and only if condition\/ {\rm (CSub)} holds.
\end{lemma}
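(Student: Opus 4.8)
The plan is to reduce everything to Theorem~\ref{Thm:Distributivity} by showing that condition \textnormal{(CSub)} is equivalent to condition~(b) of that theorem, namely ``for each $x \in \Sigma_E \setminus \Sigma_T$ there is $y \notin \Sigma_E$ with $T(y) \subseteq T(x)$''. These two conditions are not obviously equivalent: (b) ranges over \emph{all} $x \in \Sigma_E \setminus \Sigma_T$ but only demands an inclusion $T(y)\subseteq T(x)$, whereas \textnormal{(CSub)} restricts attention to those $x$ with $T(x)$ a block but demands the equality $T(y)=T(x)$. Once the equivalence (b) $\Leftrightarrow$ \textnormal{(CSub)} is established, Theorem~\ref{Thm:Distributivity} immediately yields the claim.

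For ``(b) $\Rightarrow$ \textnormal{(CSub)}'' I would argue directly. Fix $x \in \Sigma_E \setminus \Sigma_T$ with $T(x)$ a block and let $y \notin \Sigma_E$ be the element provided by~(b), so $T(y) \subseteq T(x)$. Since $T$ is reflexive, $y \in T(y) \subseteq T(x)$; and since $T(x)$ is a block, hence a preblock, every member of $T(x)$ — in particular $y$ — satisfies $T(x) \subseteq T(y)$. Combining the two inclusions gives $T(y) = T(x)$, which is exactly what \textnormal{(CSub)} asks for. This direction is essentially a one-line observation.

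For ``\textnormal{(CSub)} $\Rightarrow$ (b)'' I would fix $x \in \Sigma_E \setminus \Sigma_T$. As $x \notin \Sigma_T$, the neighbourhood $T(x)$ contains some $z \neq x$, so $x \, T \, z$, and therefore there is a $T$-block $B$ with $x, z \in B$. Because $T$ is induced by an irredundant covering, that covering is $\{ T(w) \mid T(w) \text{ is a block} \}$, so $B = T(c)$ for a suitable $c$ with $T(c)$ a block; moreover $x \in B$ forces $B \subseteq T(x)$. If $c \notin \Sigma_E$, then $y := c$ already witnesses~(b). If instead $c \in \Sigma_E$, then, since $B = T(c)$ contains the two distinct points $x$ and $z$, we have $T(c) \neq \{c\}$, i.e.\ $c \in \Sigma_E \setminus \Sigma_T$ and $T(c)$ is a block; applying \textnormal{(CSub)} to $c$ yields $y \notin \Sigma_E$ with $T(y) = T(c) = B \subseteq T(x)$, so again~(b) holds.

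The main obstacle — really the only subtle point — is the reduction to blocks in the second implication: an arbitrary $x \in \Sigma_E \setminus \Sigma_T$ need not have $T(x)$ a block, so \textnormal{(CSub)} cannot be applied to $x$ itself; one must first descend to a block $T(c) \subseteq T(x)$ via a witness $z \neq x$ and a block containing both, and then split according to whether $c \in \Sigma_E$ or $c \notin \Sigma_E$. Everything else is routine bookkeeping with the facts about blocks, preblocks, and irredundant coverings recalled in Section~\ref{SubSec:Coverings}, together with Theorem~\ref{Thm:Distributivity}.
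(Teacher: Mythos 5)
Your proposal is correct and follows essentially the same route as the paper: both directions reduce (CSub) to condition (b) of Theorem~\ref{Thm:Distributivity}, with the forward implication using that a block $T(x)$ containing $y$ forces $T(x)\subseteq T(y)$, and the converse descending from an arbitrary $x\in\Sigma_E\setminus\Sigma_T$ to a block $T(c)\subseteq T(x)$ via a witness $z\neq x$ and then splitting on whether $c\in\Sigma_E$. The only difference is cosmetic naming of the intermediate element.
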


\begin{proof}
We prove that (CSub) is equivalent to condition (b) of Theorem~\ref{Thm:Distributivity}, from which the claim follows. 

Assume that $x \in \Sigma_E \setminus \Sigma_T$. Then, there exists an element $y \notin \Sigma_E$ with $T(y) \subseteq T(x)$.
If this $T(x)$ is a block, then $y \in T(y) \subseteq T(x)$ implies $T(x) \subseteq T(y)$. Thus, $T(y) = T(x)$.

Conversely, assume that (Csub) holds and $x \in \Sigma_E \setminus \Sigma_T$. Then $T(x) \neq \{x\}$, that is, $x \, T \, z$ for some
$z \neq x$. Hence, there exists $y$ such that $T(y)$ is a block and $x,z \in T(y)$. Because $x \in T(y)$, we have $T(y) \subseteq T(x)$.
If $y \notin \Sigma_E$, then there is nothing left to prove. If $y \in \Sigma_E$, then $x,z \in T(y)$ and $x \neq z$ imply that $T(y)$
has at least two elements. Thus, $y \in \Sigma_E \setminus \Sigma_T$, and by (CSub), there exists an element
$y' \notin \Sigma_E$ such that $T(y') = T(y) \subseteq T(x)$. This completes the proof.  
\end{proof}

It is proved in \cite{JarRad18} that when $T$ is a tolerance induced by an irredundant covering of $U$, 
$\mathit{RS}(T)$ is a regular double $p$-algebra. Recall from \cite{Kat73}, for example, that
an algebra $(L,\vee,\wedge,{^*},{^+},0,1)$ is called a \emph{double $p$-algebra} if $(L,\vee,\wedge,0,1)$ is
a bounded lattice such that $^*$ is the \emph{pseudocomplement} operation and $^+$ is the \emph{dual pseudocomplement operation} on $L$. 
Note that this means that for all $a \in L$, $a \wedge b = 0$ if and only if $b \leq a^*$ and
$a \vee b = 1$ if and only if $b \geq a^+$. A double $p$-algebra is regular if
\[ a^* = b^* \quad \text{and} \quad a^+ = b^+ \quad \text{imply} \quad a=b .\]

A \emph{Boolean lattice} is a bounded distributive lattice $L$ such that each element $a \in L$ has
a \emph{complement} $a'$ which satisfies
\[ a \wedge a' = 0 \quad \text{and} \quad a \vee a' = 1. \]
Note that a Boolean lattice $B$ forms trivially a regular double $p$-algebra $(B,\vee,\wedge,{'},{'},0,1)$.

In the proof of the following proposition we need the fact that $\mathrm{Def}(E)$ is a Boolean lattice in 
which $X' = X^c$ for all $X \in \mathrm{Def}(E)$. In addition, it is proved in \cite{JarRad14} that if $T$ is
a tolerance induced by an irredundant covering of $U$, then $\wp(U)^T$ is a Boolean lattice
such that $X' = (X^c)^T$ for $X \in \wp(U)^T$.

\begin{proposition} \label{Prop:DoublePseudocomplement}
Let $T$ be a tolerance induced by an irredundant covering of $U$ and let $E$ be an equivalence on $U$ such that
$T$ is $E$-compatible. If {\rm (CSub)} holds, then
\[ (\mathit{RS}(E,T),\vee,\wedge,^*,^+,(\emptyset,\emptyset), (U,U) )\]
is a double $p$-algebra such that for any $(A,B),(C,D) \in \mathit{RS}(E,T)$,
\begin{align*}
(A,B) \wedge (C,D) &= (A \cap C, ((B \cap D)_T)^T ), \\ 
(A,B) \vee   (C,D) &= (A \cup C, B \cup D) ), \\
(A,B)^* &= ( ((B^c)^T)_T, (B^c)^T  ), \\
(A,B)^+ &= ( A^c, (A^c)^T  ). 
\end{align*}
\end{proposition}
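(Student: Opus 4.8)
The plan is to first establish the formulas for $\wedge$ and $\vee$, then verify that the asserted $^*$ and $^+$ are genuinely the pseudocomplement and dual pseudocomplement, and finally observe that the algebra is a double $p$-algebra. The meet and join formulas come almost for free: since (CSub) holds, Lemma~\ref{Lem:SimpleCondition} tells us $\mathit{RS}(E,T)$ is a complete sublattice of $\wp(U)_E \times \wp(U)^T$, so binary meets and joins are computed coordinatewise in that product. The join in $\wp(U)_E$ is union and the join in $\wp(U)^T$ is union by \eqref{Eq:UpLattice}, giving $(A,B)\vee(C,D) = (A\cup C, B\cup D)$. The meet in $\wp(U)_E = \mathrm{Def}(E)$ is intersection, while the meet in $\wp(U)^T$ is $B\wedge D = ((B\cap D)_T)^T$ by \eqref{Eq:UpLattice}; hence $(A,B)\wedge(C,D) = (A\cap C, ((B\cap D)_T)^T)$. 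The bounds $(\emptyset,\emptyset)$ and $(U,U)$ are clearly the least and greatest elements.

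For the pseudocomplement, I would proceed as follows. Given $(A,B)\in\mathit{RS}(E,T)$, I claim $(A,B)^* = (((B^c)^T)_T, (B^c)^T)$. First I must check this pair actually lies in $\mathit{RS}(E,T)$: writing $C = (B^c)^T$, this is exactly the pair $(C_T, C^T)$ for a set of the form $C = (B^c)^T$, and one checks $(C_T,C^T)\in\mathit{RS}(T)$; since $B\in\wp(U)^T\subseteq\mathrm{Def}(E)$ we have $B^c\in\mathrm{Def}(E)$, and $(B^c)^T$ is $E$-definable by Lemma~\ref{Lem:AllTheSame}, and an argument as in Theorem~\ref{Thm:CompleteLattice} shows the pair belongs to $\mathit{RS}(E,T)$ (using that $\wp(U)^T$ is Boolean so $(B^c)^T$ is the complement of $B$ in that lattice). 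To see it is the pseudocomplement, I use that $\wp(U)^T$ is a Boolean lattice with $B' = (B^c)^T$ and that meets are coordinatewise: $(A,B)\wedge(X_E,X^T) = (\emptyset,\emptyset)$ forces $X^T \wedge B = \emptyset$ in $\wp(U)^T$, i.e. $X^T \leq B' = (B^c)^T$; conversely if $X^T\subseteq (B^c)^T$ then the second coordinate of the meet vanishes, and so does the first since $A\cap X_E \subseteq B \cap (B^c)^T = \emptyset$ using $X_E\subseteq X^T$ and $A\subseteq B$. Thus $(A,B)\wedge(X_E,X^T)=(\emptyset,\emptyset)$ iff $(X_E,X^T)\leq (((B^c)^T)_T,(B^c)^T)$, which is the defining property of the pseudocomplement.

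Dually, for $^+$ I claim $(A,B)^+ = (A^c, (A^c)^T)$, which lies in $\mathit{RS}(E,T)$ since $A\in\mathrm{Def}(E)$ gives $A^c\in\mathrm{Def}(E)$ and $(A^c)^T$ is its image; it is the pair $((A^c)_E,(A^c)^T) = (A^c,(A^c)^T)$. Using that $\mathrm{Def}(E)$ is a Boolean lattice with $A' = A^c$ and that joins are coordinatewise unions, $(A,B)\vee(X_E,X^T) = (U,U)$ forces $A\cup X_E = U$, i.e. $X_E \supseteq A^c$ in $\mathrm{Def}(E)$; conversely if $A^c\subseteq X_E$ then the first coordinate is $U$, and the second is $B\cup X^T \supseteq X^T \supseteq X_E \supseteq A^c$, while also $B\cup X^T \supseteq (A^c)^T$ since the right side is the $^T$-closure of $A^c$ and $B\cup X^T$ is already $E$-definable hence $^T$-closed and contains $A^c$; combined with $B^T = B$ and $A\subseteq B$ one gets $B\cup X^T = U$. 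Hence $(A,B)\vee(X_E,X^T) = (U,U)$ iff $(X_E,X^T) \geq (A^c,(A^c)^T)$, the defining property of $^+$. This makes $\mathit{RS}(E,T)$ a double $p$-algebra. The main obstacle is the bookkeeping in the two membership claims — verifying $(((B^c)^T)_T,(B^c)^T)$ and $(A^c,(A^c)^T)$ genuinely belong to $\mathit{RS}(E,T)$ — and carefully tracking the interaction between the $\wp(U)^T$-complement $(\,\cdot\,^c)^T$ and ordinary set complement when checking that the join of $(A,B)$ with its $^+$ reaches $(U,U)$ in the second coordinate.
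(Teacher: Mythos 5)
Your overall strategy coincides with the paper's: use (CSub) to get coordinatewise meets and joins, then verify the defining property of the pseudocomplement and its dual via the Boolean structure of $\mathrm{Def}(E)$ and $\wp(U)^T$. However, the key step in your pseudocomplement argument is wrong. You claim the first coordinate of $(A,B)\wedge(X_E,X^T)$ vanishes because $A\cap X_E\subseteq B\cap(B^c)^T=\emptyset$. The set $B\cap(B^c)^T$ is \emph{not} empty in general: $(B^c)^T$ is the complement of $B$ in the lattice $\wp(U)^T$, whose meet is $((\,\cdot\cap\cdot\,)_T)^T$, not plain intersection. For instance, with $U=\{1,\dots,6\}$, the covering $\{\{1,2,3,4\},\{3,4,5,6\}\}$ and $B=\{1\}^T=\{1,2,3,4\}$, one has $(B^c)^T=\{3,4,5,6\}$ and $B\cap(B^c)^T=\{3,4\}$. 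What is true (and what the paper proves) is $A\cap((B^c)^T)_T=\emptyset$, but this requires using that $(A,B)$ is a genuine rough set pair $(X_E,X^T)$: from $A^T=(X_E)^T\subseteq X^T=B$ one gets $B^c\subseteq(A^T)^c=(A^c)_T$, hence $((B^c)^T)_T\subseteq(((A^c)_T)^T)_T=(A^c)_T\subseteq A^c$. Your argument never invokes any relation between $A$ and $B$ beyond $A\subseteq B$, and that is not enough. (You also omit, in the ``forces'' direction, the check $X_E\subseteq((B^c)^T)_T$, which follows from $X_E\subseteq(X^T)_T\subseteq((B^c)^T)_T$.)

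Two further assertions in your $^+$ paragraph are false: an $E$-definable set need not be $^T$-closed (it is a union of $E$-classes, hence $^E$-closed, but $T\supseteq E$ can relate distinct $E$-classes), and $B^T=B$ fails for $B=X^T$ in general (only $((X^T)_T)^T=X^T$ holds). Fortunately neither is needed: from $A\cup X_E=U$ you get $B^c\subseteq A^c\subseteq X_E\subseteq X^T$, so $B\cup X^T\supseteq B\cup B^c=U$ immediately, and $(A^c)^T\subseteq X^T$ follows from $A^c\subseteq X$ by monotonicity of $^T$ together with $(X^T)^T\subseteq\dots$ — more simply, as the paper does, from $A^c\subseteq X_E\subseteq X$ one gets $(A^c)^T\subseteq X^T$ directly. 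With the pseudocomplement step repaired as above, the remainder of your outline (membership of the two candidate pairs in $\mathit{RS}(E,T)$ and the maximality/minimality arguments) matches the paper's proof.
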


\begin{proof} The operations $\vee$ and $\wedge$ are clear, because they are inherited from $\wp(U)_E \times \wp(U)^T$.

Let $(A,B) \in \mathit{RS}(E,T)$. Then $A \in \mathrm{Def}(E)$ gives and $A^c \in  \mathrm{Def}(E)$ and 
$(A^c)_E = A^c$. We have that 
\[ ( A^c, (A^c)^T  ) = ( (A^c)_E, (A^c)^T  ) \in  \mathit{RS}(E,T).\]
By Lemma~\ref{Lem:AllTheSame}(b), $(((B^c)^T)_T)_E = ((B^c)^T)_T$. Since $T$ is a tolerance, $(((B^c)^T)_T)^T = (B^c)^T$. Thus also
\[ ((B^c)^T)_T, (B^c)^T) = (((B^c)^T)_T)_E, ((B^c)^T)_T)^T) \in  \mathit{RS}(E,T).\]

Now 
\begin{equation} \label{Eq:PseudoMeet}
(A,B) \wedge ( ((B^c)^T)_T, (B^c)^T  ) = ( A \cap ((B^c)^T)_T, (B_T \cup ((B^c)^T)_T)^T  ) . 
\end{equation}
Because $A = X_E$ and $B = X^T$ for some $X \subseteq U$, $A^T = (X_E)^T \subseteq X^T = B$ and 
$B^c \subseteq (A^T)^c = (A^c)_T$. Then,
\[ ((B^c)^T)_T \subseteq (((A^c)_T)^T)_T = (A^c)_T \subseteq A^c , \]
and we obtain $A \cap ((B^c)^T)_T = \emptyset$. In addition, 
\[ B_T \cap ((B^c)^T)_T = B_T \cap ((B_T)^c)_T \subseteq B_T \cap (B_T)^c = \emptyset, \]
from which we get $(B_T \cap ((B^c)^T)_T)^T = \emptyset$. Thus, by \eqref{Eq:PseudoMeet},
\[ (A,B) \wedge ( ((B^c)^T)_T, (B^c)^T  )  = ( A \cap ((B^c)^T)_T, (B_T \cup ((B^c)^T)_T)^T  ) = (\emptyset, \emptyset). \]

Suppose that $(A,B) \wedge ( Y_E, Y^T  ) = (\emptyset, \emptyset)$. Because  $\mathit{RS}(E,T)$ is a 
complete sublattice of $\wp(U)_E \times \wp(U)^T$, 
\begin{center}
$A \wedge Y_E = \emptyset$ in $\mathrm{Def}(E)$ and $B \wedge Y^T = \emptyset$ in $\wp(U)^T$. 
\end{center}
This gives that $Y_E \subseteq A^c$ and $Y^T \subseteq (B^c)^T$. Recall from the above that $\wp(U)^T$
is a Boolean lattice in which $B' = (X^c)^T$. Then, $Y_E \subseteq Y \subseteq (Y^T)_T \subseteq ((B^c)^T)_T$ 
and hence,
\[ (Y_E, Y^T) \leq ( ((B^c)^T)_T, (B^c)^T).\]
We have proved $(A,B)^* = ( ((B^c)^T)_T, (B^c)^T  )$. 

For the other equality, we have that
\[ (A,B) \vee (A^c, (A^c)^T) = (A \cup A^c, B \cup (A^c)^T).\]
Now $A \cup A^c = U$ and $A \subseteq B$ gives $B^c \subseteq A^c$. 
Thus, $B \cup (A^c)^T \supseteq B \cup A^c \supseteq B \cup B^c = U$. Therefore,
$(A,B) \vee (A^c, (A^c)^T) = (U,U)$. On the other hand, assume that
$(A,B) \vee (Y_E,Y^Y) = (U,U)$. Because $\mathit{RS}(E,T)$ is a complete sublattice of $\wp(U)_E \times \wp(U)^T$, 
we have that $A \vee Y_E = U$ in $\mathrm{Def}(E)$. Therefore,
$A^c \subseteq Y_E$ and $A^c \subseteq Y_E \subseteq Y$ gives $(A^c)^T \subseteq Y^T$.
This means that $(A_c,(A^c)^T) \leq (Y_E,Y^T)$. We get $(A,B)^+ = ( A^c, (A^c)^T  )$.
\end{proof}

\begin{corollary} \label{Corollary:PseudoPseudo}
Let $T$ be a tolerance induced by an irredundant covering of $U$ and let $E$ be an equivalence on $U$ such that
$T$ is $E$-compatible. If {\rm (CSub)} holds, then for $(A,B) \in \mathit{RS}(E,T)$,
\[ (A,B)^{**} = (B_T,B) \quad \text{and} \quad (A,B)^{++} = (A, A^T).\]
\end{corollary}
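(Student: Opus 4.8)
The plan is to obtain both identities by a direct double application of the explicit formulas for the pseudocomplement $^*$ and the dual pseudocomplement $^+$ furnished by Proposition~\ref{Prop:DoublePseudocomplement}. Since that proposition already establishes that $^*$ and $^+$ are genuine unary operations on $\mathit{RS}(E,T)$ (under the standing hypotheses, including {\rm (CSub)}), I may substitute the pairs $(A,B)^*$ and $(A,B)^+$ back into the same formulas without any further justification.

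I would first dispose of $(A,B)^{++}$, which is purely formal. By Proposition~\ref{Prop:DoublePseudocomplement}, $(A,B)^+ = (A^c,(A^c)^T)$, a pair whose first coordinate is $A^c$. Applying the $^+$-formula once more to this pair gives $(A,B)^{++} = \bigl((A^c)^c, ((A^c)^c)^T\bigr) = (A,A^T)$, using only that set complementation is an involution.

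For $(A,B)^{**}$ I would start from $(A,B)^* = \bigl(((B^c)^T)_T,\, (B^c)^T\bigr)$, a pair whose second coordinate is $D := (B^c)^T$. Applying the $^*$-formula to this pair yields $(A,B)^{**} = \bigl(((D^c)^T)_T,\, (D^c)^T\bigr)$, so the whole computation reduces to simplifying $(D^c)^T = (((B^c)^T)^c)^T$. This is the one substantive step: recall from the discussion preceding Proposition~\ref{Prop:DoublePseudocomplement} that $\wp(U)^T$ is a Boolean lattice in which the complement of $Z\in\wp(U)^T$ is $Z' = (Z^c)^T$. Since $B = X^T$ for some $X\subseteq U$, we have $B\in\wp(U)^T$, so $B' = (B^c)^T$ lies in $\wp(U)^T$ as well, and hence $(((B^c)^T)^c)^T = (B')' = B$. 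Substituting back gives $(A,B)^{**} = (B_T, B)$.

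The only real obstacle is recognizing that the nested approximation expression $(((B^c)^T)^c)^T$ is nothing but the double Boolean complement of $B$ taken inside $\wp(U)^T$, and therefore collapses to $B$; once this is observed, both claims follow immediately. I would state this identification explicitly and then simply record the two resulting pairs, noting that all other manipulations are routine substitutions into the formulas of Proposition~\ref{Prop:DoublePseudocomplement}.
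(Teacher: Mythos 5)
Your proof is correct and follows essentially the same route as the paper: a direct double application of the formulas from Proposition~\ref{Prop:DoublePseudocomplement}. The only (equally valid) difference is in simplifying $(((B^c)^T)^c)^T$: the paper uses the duality $((B^c)^T)^c = B_T$ together with $(B_T)^T = ((X^T)_T)^T = X^T = B$, whereas you invoke the involutivity of the Boolean complement $Z \mapsto (Z^c)^T$ on $\wp(U)^T$.
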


\begin{proof} By Proposition~\ref{Prop:DoublePseudocomplement},
\[
(A,B)^{**} = ( ((B^c)^T)_T, (B^c)^T)^* = (( (((B^c)^T)^c)^T)_T, (((B^c)^T)^c)^T) 
         = ( ((B_T)^T)_T,(B_T)^T) = (B_T, (B_T)^T).
\]
Because $(A,B) = (X_E,X^T)$ for some $X \subseteq U$, we have $(B_T)^T = ((X^T)_T)^T = X^T = B$ and $(A,B)^{**} = (B_T,B)$.
Similarly,
\[ (A,B)^{++} = (A^c, (A^c)^T)^* =  ( (A^c)^c, ((A^c)^c)^T ) = (A,A^T). \qedhere \] 
\end{proof}

\begin{theorem} \label{Theorem:Regularity}
Let $T$ be a tolerance induced by an irredundant covering of $U$ and let $E$ be an equivalence on $U$ such that
$T$ is $E$-compatible. If {\rm (CSub)} holds, then
\[ (\mathit{RS}(E,T),\vee,\wedge,^*,^+,(\emptyset,\emptyset), (U,U) )\]
is a regular double $p$-algebra.
\end{theorem}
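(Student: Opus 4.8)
The plan is to lean on the explicit descriptions of $^*$ and $^+$ furnished by Proposition~\ref{Prop:DoublePseudocomplement} (which is exactly where the standing hypothesis (CSub) is used), so that verifying regularity collapses to the injectivity of Boolean complementation. Concretely, regularity asks: if $(A,B)^* = (C,D)^*$ and $(A,B)^+ = (C,D)^+$ for $(A,B),(C,D) \in \mathit{RS}(E,T)$, then $(A,B) = (C,D)$.

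First I would handle the $^+$ side. By Proposition~\ref{Prop:DoublePseudocomplement}, $(A,B)^+ = (A^c,(A^c)^T)$ and $(C,D)^+ = (C^c,(C^c)^T)$, so equality of the dual pseudocomplements forces $A^c = C^c$, and taking complements in $\wp(U)$ yields $A = C$. (Equivalently, one may read $A$ off the first coordinate of $(A,B)^{++} = (A,A^T)$ from Corollary~\ref{Corollary:PseudoPseudo}.) Note that on this side the first coordinate already determines the pair, since the second coordinate $(A^c)^T$ is a function of $A^c$.

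Next I would handle the $^*$ side symmetrically. Again by Proposition~\ref{Prop:DoublePseudocomplement}, $(A,B)^* = (((B^c)^T)_T,(B^c)^T)$ and likewise for $(C,D)$, and here the first coordinate is a function of the second, so $(A,B)^* = (C,D)^*$ is equivalent to $(B^c)^T = (D^c)^T$. This is the one place the structural input recalled just before the proposition is needed: since every element of $\mathit{RS}(E,T)$ has its second coordinate of the form $X^T$, both $B$ and $D$ lie in $\wp(U)^T$, and for $T$ induced by an irredundant covering $\wp(U)^T$ is a Boolean lattice in which the complement of an element $Z$ is precisely $(Z^c)^T$. Thus $(B^c)^T = (D^c)^T$ says that $B$ and $D$ have the same complement in a Boolean lattice; since complementation there is an involution, $B = D$. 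Combining this with $A = C$ gives $(A,B) = (C,D)$, which is regularity.

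I do not anticipate a real obstacle here: the genuine content — that $\mathit{RS}(E,T)$ is a double $p$-algebra, the closed forms for $^*$ and $^+$, and the Boolean structure of $\mathrm{Def}(E)$ and of $\wp(U)^T$ together with the identification of their complements — has all been established earlier. The only points to be careful about are to state (CSub) as the running hypothesis (so that Proposition~\ref{Prop:DoublePseudocomplement} applies) and to observe explicitly that in each of the two coordinatewise comparisons one coordinate is a function of the other, so matching second coordinates suffices on the $^*$ side and matching first coordinates suffices on the $^+$ side.
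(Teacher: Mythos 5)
Your proposal is correct and follows essentially the same route as the paper: the paper simply applies the operations once more and reads $B=D$ off $(A,B)^{**}=(B_T,B)$ and $A=C$ off $(A,B)^{++}=(A,A^T)$ via Corollary~\ref{Corollary:PseudoPseudo}, which encodes exactly the injectivity of $B\mapsto (B^c)^T$ on $\wp(U)^T$ that you invoke. The only cosmetic difference is that you argue the $^*$ side through the Boolean complementation of $\wp(U)^T$ rather than through the double pseudocomplement formula, but the content is identical.
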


\begin{proof}
Suppose $(A,B)^* = (C,D)^*$ and $(A,B)^+ = (C,D)^+$ for some $(A,B)$ and $(C,D)$ in  $\mathit{RS}(E,T)$.
Then,
\[ (A,B)^{**} = (B_T,B) = (D_T,D) = (C,D)^{**} \] 
and
\[ (A,B)^{++} = (A,A^T) = (C,C^T) = (C,D)^{++}.\]
We have $B = D$ and $A = C$, that is, $(A,B) = (C,D)$.
\end{proof}

A \emph{Heyting algebra} is a bounded lattice $L$ such that for all $a,b\in L$, there is a greatest element $x$ of $L$ satisfying
$a \wedge x \leq b$. This element $x$ is called the \emph{relative pseudocomplement}  of $a$ with respect to $b$, 
and it is denoted by $a \Rightarrow b$.  
By \cite[Theorem~1]{Kat73}, we can write the following corollary of Theorem~\ref{Theorem:Regularity}.

\begin{corollary} \label{Corollary:Heyting}
Let $T$ be a tolerance induced by an irredundant covering of $U$ and let $E$ be an equivalence on $U$ such that
$T$ is $E$-compatible. If {\rm (CSub)} holds, then $\mathit{RS}(E,T)$ is a Heyting algebra
\end{corollary}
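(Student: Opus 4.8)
The plan is to read off the Heyting property from complete distributivity, which (CSub) already makes available. First I would note that, by Lemma~\ref{Lem:SimpleCondition}, condition (CSub) forces $\mathit{RS}(E,T)$ to be a complete sublattice of $\wp(U)_E \times \wp(U)^T$. Since $T$ is induced by an irredundant covering, Remark~\ref{Rem:Alexandrov} then gives that $\wp(U)_E \times \wp(U)^T$ is algebraic and completely distributive and that $\mathit{RS}(E,T)$, being a complete sublattice, inherits both properties. Thus it remains only to run the purely order-theoretic step: a completely distributive complete lattice is a Heyting algebra.

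For that step I would argue directly rather than quote a black box. Specializing, in the complete distributive law, one of the two index families to the constant family with value $a$ yields the join-infinite distributive law $a \wedge \bigvee S = \bigvee_{s \in S} (a \wedge s)$ for every $a \in \mathit{RS}(E,T)$ and every $S \subseteq \mathit{RS}(E,T)$. Given $a,b$, set $a \Rightarrow b = \bigvee \{ x \in \mathit{RS}(E,T) \mid a \wedge x \leq b \}$, which exists by completeness. Then, by the join-infinite distributive law, $a \wedge (a \Rightarrow b) = \bigvee \{ a \wedge x \mid a \wedge x \leq b \} \leq b$, so $a \Rightarrow b$ itself satisfies $a \wedge (a \Rightarrow b) \leq b$ and is therefore the greatest such element, i.e.\ the relative pseudocomplement of $a$ with respect to $b$. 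Hence $\mathit{RS}(E,T)$ is a Heyting algebra. The route signalled by the reference \cite[Theorem~1]{Kat73} is the same conclusion packaged differently: Theorem~\ref{Theorem:Regularity} already provides a regular, hence distributive, double $p$-algebra, and combined with the completeness and complete distributivity above this again produces the residuals $a \Rightarrow b$.

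I do not expect a genuine difficulty here; the proof is the assembly of Lemma~\ref{Lem:SimpleCondition} and Remark~\ref{Rem:Alexandrov} with one standard implication. The only point worth stating carefully is that we really use \emph{complete} distributivity, not merely distributivity: for an infinite bounded distributive lattice the existence of relative pseudocomplements is an extra hypothesis, so quoting Theorem~\ref{Theorem:Regularity} alone would not suffice, and this is precisely the reason (CSub) is imposed in the statement (cf.\ Example~\ref{Example:NonDistributiveLattice}, where (CSub) fails and the lattice is not even distributive).
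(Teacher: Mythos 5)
Your proof is correct, but it follows a different route from the one the paper actually takes. The paper's proof is a one-liner: Theorem~\ref{Theorem:Regularity} makes $\mathit{RS}(E,T)$ a regular (distributive) double $p$-algebra, and Katri\v{n}\'ak's Theorem~1 in \cite{Kat73} states that every such algebra is relatively pseudocomplemented, with $a \Rightarrow b$ given by an explicit term in $\vee,\wedge,{}^*,{}^+$; no completeness is needed for that step. You instead go through complete distributivity: (CSub) gives a complete sublattice via Lemma~\ref{Lem:SimpleCondition}, Remark~\ref{Rem:Alexandrov} transfers complete distributivity from $\wp(U)_E \times \wp(U)^T$, and the join-infinite distributive law then yields $a \Rightarrow b = \bigvee\{x \mid a \wedge x \leq b\}$. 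This is sound, and in fact the paper records exactly this alternative in the sentence immediately after the corollary. The trade-off: your argument is self-contained and elementary but leans on completeness and on the irredundant-covering hypothesis (through Remark~\ref{Rem:Alexandrov}), whereas Katri\v{n}\'ak's route is purely equational and produces the residual as a polynomial in the double $p$-algebra operations. Your closing claim that ``quoting Theorem~\ref{Theorem:Regularity} alone would not suffice'' is therefore slightly off: it does suffice once combined with \cite[Theorem~1]{Kat73}, because regularity (not mere distributivity) is what supplies the relative pseudocomplements there — that combination is precisely the paper's proof.
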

\noindent%
Note that if an $E$-compatible tolerance $T$ is induced by an irredundant covering and (CSub) holds, then
$\mathit{RS}(E,T)$ is a Heyting algebra also because it is completely distributive.

It is also proved in \cite{JarRad18} that if $T$ is a tolerance induced by an irredundant covering of $U$,
then $\mathit{RS}(T)$ forms so-called De~Morgan algebra. A \emph{De~Morgan algebra} $(L,\vee,\wedge,{\sim},0,1)$
is a bounded distributive lattice $(L,\vee,\wedge,0,1)$ equipped with an operation $\sim$ which
satisfies:
\begin{center}
${\sim}{\sim} x =x$  \quad and \quad  $x\leq y \iff {\sim} y \leq {\sim} x$.
\end{center}
Such a map $\sim$ is an order-isomorphism from $(L,\leq)$ to $(L,\geq)$.
This means that the Hasse diagram of $L$ looks the same when it is  turned upside-down.

\begin{example} \label{Exa:NonDual}
Let $U = \{1,2,3,4,5,6\}$ and let $T$ be the tolerance induced by the irredundant covering
\[ \mathcal{H} = \{ \{1,2,3,4\} , \{3,4,5,6\} \} \]
of $U$. We have 
\[ T(1) = T(2) =  \{1,2,3,4\}, \ T(3) = T(4) = U, \ T(5) = T(6) =  \{3,4,5,6\}.\]
Let $E = \ker T$. Then, $U/E = \{ \{1,2\}, \{3,4\}, \{5,6\}\}$. This means that 
$\Sigma_E = \emptyset$ and $\mathit{RS}(E,T)$ is a complete sublattice of $\wp(U)_E \times \wp(U)^T$.
By Theorem~\ref{Theorem:Regularity},
\[ (\mathit{RS}(E,T),\vee,\wedge,^*,^+,(\emptyset,\emptyset), (U,U) )\]
is a regular and distributive double $p$-algebra.

The Hasse diagram of $\mathit{RS}(E,T)$ is given in Figure~\ref{Fig:NonMorgan}. Because
$\mathit{RS}(E,T)$ is not isomorphic to its dual, whose Hasse diagram is obtained by
turning the Hasse diagram of $\mathit{RS}(E,T)$ upside down, $\mathit{RS}(E,T)$ cannot form a De~Morgan algebra.

\begin{figure}[ht]
\centering
\includegraphics[width=60mm]{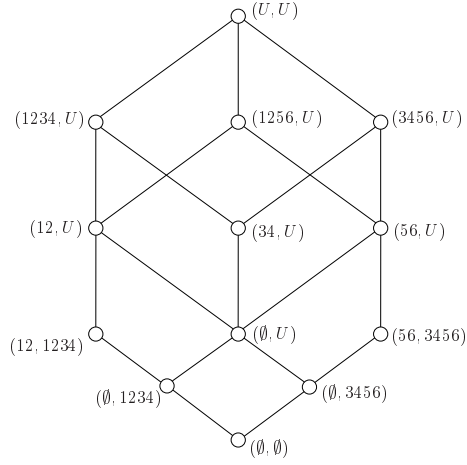}
\caption{A regular and distributive double $p$-algebra $\mathit{RS}(E,T)$ of Example~\ref{Exa:NonDual}
does not form a De~Morgan algebra, because it is  not isomorphic to its dual.}
\label{Fig:NonMorgan}
\end{figure}
\end{example}

We end this work by considering the case in which $E$ is an equivalence on $U$ and the $E$-compatible tolerance is an equivalence.
Suppose that $F$ is an equivalence on $U$ such that $E \subseteq \ker F$. Because $F$ is an equivalence, $\ker F = F$ and we have 
$E \subseteq F$. On the other hand, if $E \subseteq F$, then $E \subseteq \ker F$. Thus, whenever $F$ is an equivalence,  
$F$ is $E$-compatible if and only if $E \subseteq F$. Notice that this means that $F$-classes are unions of $E$-classes.

\pagebreak%

Let $E$ and $F$ be two equivalences on $U$ such that $E \subseteq F$. The ``tolerance'' $F$ obviously is induced by an irredundant covering 
$U/F$ and it is compatible with $E$. Let us introduce the following condition:
\begin{tabbing}\qquad\qquad\= \\
(CSub$^\circ$) \>If $[x]_F$ is non-singleton, then there is $y \, F \, x$ such that $[y]_E$ is non-singleton.
\end{tabbing}

\begin{lemma}\label{lem:nonsingle}
Let $E$ and $F$ be two equivalences on $U$ such that $E \subseteq F$. Then $\mathit{RS}(E,F)$ is a 
complete sublattice of $\wp(U)_E \times \wp(U)^F$ if and only condition\/ {\rm (CSub$^\circ$)} holds.
\end{lemma}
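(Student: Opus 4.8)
The plan is to deduce the lemma from Lemma~\ref{Lem:SimpleCondition} by taking the $E$-compatible tolerance to be the equivalence $F$ itself, and then to check that in this special case condition (CSub) translates literally into (CSub$^\circ$).

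First I would record the elementary facts that make the reduction legitimate. The quotient set $U/F$ is an irredundant covering of $U$: the $F$-classes are nonempty, their union is $U$, and deleting any class $[x]_F$ leaves a family whose union misses $x$. The tolerance $T_{U/F}$ induced by this covering equals $F$, since $x \, T_{U/F} \, y$ holds exactly when $x$ and $y$ share an $F$-class. Thus $F$ is a tolerance induced by an irredundant covering, and because $\ker F = F$, the hypothesis $E \subseteq F$ is precisely the statement that $F$ is $E$-compatible, as noted above. Hence Lemma~\ref{Lem:SimpleCondition} applies, and $\mathit{RS}(E,F)$ is a complete sublattice of $\wp(U)_E \times \wp(U)^F$ if and only if (CSub) holds for the pair $(E,F)$.

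Next I would unwind (CSub) in this setting. For the equivalence $F$ we have $F(x) = [x]_F$, and the $T$-blocks of $F$ are exactly the $F$-classes: a preblock of $F$ is a set of pairwise $F$-related elements, hence contained in one class, and the maximal ones are the classes themselves. In particular $F(x)$ is always a block, so the qualifier ``such that $T(x)$ is a block'' in (CSub) is vacuous. Also $\Sigma_F = \{ [x]_F \mid [x]_F = \{x\}\}$, so $x \in \Sigma_E \setminus \Sigma_F$ means $[x]_E = \{x\}$ while $[x]_F$ is non-singleton, and the existence of $y \notin \Sigma_E$ with $F(y) = F(x)$ means there is $y \, F \, x$ with $[y]_E$ non-singleton. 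Thus (CSub) for $(E,F)$ says: whenever $[x]_F$ is non-singleton and $[x]_E = \{x\}$, there is $y \, F \, x$ with $[y]_E$ non-singleton.

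Finally I would check that this is equivalent to (CSub$^\circ$). The implication (CSub$^\circ$)$\Rightarrow$(CSub) is immediate, since (CSub) only demands the conclusion in the sub-case where $[x]_E$ is a singleton. For the converse, assume (CSub) and let $[x]_F$ be non-singleton; if $[x]_E$ is already non-singleton take $y = x$, and otherwise $x \in \Sigma_E \setminus \Sigma_F$ and (CSub) supplies the desired $y$. Hence (CSub)$\iff$(CSub$^\circ$) and the lemma follows. There is no genuine obstacle here; the only mildly delicate point is the bookkeeping that identifies the blocks of the tolerance $F$ with its equivalence classes, so that the ``block'' clause in (CSub) disappears — once that is in place the rest is a routine translation.
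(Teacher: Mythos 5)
Your proof is correct and follows essentially the same route as the paper: both reduce to Lemma~\ref{Lem:SimpleCondition} by observing that $F$ is the tolerance induced by the irredundant covering $U/F$ and is $E$-compatible precisely when $E \subseteq F$, and then verify that (CSub) with $T = F$ is equivalent to (CSub$^\circ$) because the blocks of $F$ are exactly its equivalence classes. Your treatment is, if anything, slightly more explicit than the paper's about why the ``block'' clause becomes vacuous.
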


\begin{proof} We show that (CSub$^\circ$) is equivalent to (CSub), when we replace $T$ by $F$ in (CSub).

\smallskip\noindent%
(CSub)$\Rightarrow$(CSub$^\circ$): 
Suppose that $[x]_F$ is non-singleton. If $[x]_E$ is non-singleton, we may choose $y = x$.
If $[x]_E$ is singleton, then $x \in \Sigma_E \setminus \Sigma_F$. The equivalence class $[x]_F$ is a block.
By (CSub), there exists $y \notin \Sigma_E$ such that $[y]_F = [x]_F$. This means that $y \, F \, x$ and $[y]_E$ is non-singleton.

\smallskip\noindent%
(CSub$^\circ$)$\Rightarrow$(CSub): Let $x \in \Sigma_E \setminus \Sigma_F$. Then $[x]_F \neq \{x\}$ is an equivalence class and a block.
There exists  $y \, F \, x$ such that $[y]_E$ is non-singleton, that is, $y \notin \Sigma_E$. 
We have $[x]_F = [y]_F$, because $F$ is an equivalence.
\end{proof}

If (CSub$^\circ$) holds, then $\mathit{RS}(E,F)$ is a complete sublattice of the lattice $\wp(U)_E \times \wp(U)^F$ forming a 
distributive double $p$-algebra such that for $(A,B) \in \mathit{RS}(E,F)$,
\begin{align*}
(A,B)^*    &= ( (B^c)^F, (B^c)^F  ), &        (A,B)^+    &= ( A^c, (A^c)^F  ), \\ 
(A,B)^{**} &= (B_F,B),                 &      (A,B)^{++} &= (A, A^F). 
\end{align*}

A \emph{Stone algebra} is a pseudo-complemented distributive lattice $(L,\vee,\wedge,^*,0,1)$ such that $a^* \vee a^{**} = 1$ for all $a \in L$.
A \emph{double Stone algebra} is a double $p$-algebra $(L,\vee,\wedge,{^*},{^+},0,1)$ such that 
$a^* \vee a^{**} = 1$ and $a^+ \wedge a^{++} = 0$ for all $a \in L$.

\begin{proposition}\label{Prop:Last}
Let $E$ and $F$ be equivalences on $U$ such that $E \subseteq F$ and {\rm (CSub$^\circ$)} holds. 
\begin{enumerate}[\rm (a)]
 \item $(\mathit{RS}(E,F),\vee,\wedge,^*,(\emptyset,\emptyset), (U,U) )$ is a Stone algebra. 
 \item $(\mathit{RS}(E,F),\vee,\wedge,^*,^+,(\emptyset,\emptyset), (U,U) )$ is a double Stone algebra if and only if $E = F$.
\end{enumerate}
\end{proposition}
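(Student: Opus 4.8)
The plan is to reduce everything to the explicit description of the double $p$-algebra operations on $\mathit{RS}(E,F)$ recorded in the display following Lemma~\ref{lem:nonsingle}. This is legitimate: since (CSub$^\circ$) is assumed, Lemma~\ref{lem:nonsingle} shows it is equivalent to (CSub) with $T=F$, so $\mathit{RS}(E,F)$ is a complete sublattice of $\wp(U)_E\times\wp(U)^F$, and Proposition~\ref{Prop:DoublePseudocomplement}, Corollary~\ref{Corollary:PseudoPseudo}, and Theorem~\ref{Theorem:Regularity} apply with $T=F$. Throughout I would exploit that for $(A,B)=(X_E,X^F)\in\mathit{RS}(E,F)$ the second coordinate $B=X^F$ lies in $\mathrm{Def}(F)$, hence $B_F=B$ and $(B^c)^F=B^c$, whereas the first coordinate $A=X_E$ lies only in $\mathrm{Def}(E)$, and in general only $\mathrm{Def}(F)\subseteq\mathrm{Def}(E)$ holds (because $E\subseteq F$).

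For (a) I would simply compute $(A,B)^*\vee(A,B)^{**}$. Using $(A,B)^*=((B^c)^F,(B^c)^F)$, $(A,B)^{**}=(B_F,B)$ and the coordinatewise join, this equals $((B^c)^F\cup B_F,\,(B^c)^F\cup B)=(B^c\cup B,\,B^c\cup B)=(U,U)$. Together with the distributive double $p$-algebra structure from Theorem~\ref{Theorem:Regularity}, this establishes that $(\mathit{RS}(E,F),\vee,\wedge,{}^*,(\emptyset,\emptyset),(U,U))$ is a Stone algebra.

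For (b), the ``if'' direction is immediate: if $E=F$ then every first coordinate $A\in\mathrm{Def}(E)=\mathrm{Def}(F)$ is $F$-definable, so $A^F=A=A_F$, and $(A,B)^+\wedge(A,B)^{++}=(A^c,(A^c)^F)\wedge(A,A^F)=(A^c\cap A,\,(((A^c)^F\cap A^F)_F)^F)=(\emptyset,\emptyset)$; combined with part (a) this makes $\mathit{RS}(E,F)$ a double Stone algebra. For the ``only if'' direction, which I expect to be the crux, suppose $\mathit{RS}(E,F)$ is a double Stone algebra. The key observation is the identity $(A^c)^F=(A_F)^c$, valid because $F$ is an equivalence, whence $(A^c)^F\cap A^F=A^F\setminus A_F$, which is a union of $F$-classes and therefore fixed by both $(\cdot)_F$ and $(\cdot)^F$. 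Hence $(A,B)^+\wedge(A,B)^{++}=(\emptyset,\,(((A^c)^F\cap A^F)_F)^F)=(\emptyset,\,A^F\setminus A_F)$, and the identity $a^+\wedge a^{++}=(\emptyset,\emptyset)$ forces $A^F=A_F$, i.e.\ $A\in\mathrm{Def}(F)$, for every first coordinate $A$.

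It remains to note that the first coordinates of $\mathit{RS}(E,F)$ range over all of $\mathrm{Def}(E)=\wp(U)_E$ (for any $A\in\mathrm{Def}(E)$ the pair $(A,A^F)$ lies in $\mathit{RS}(E,F)$, taking $X=A$). Thus $\mathrm{Def}(E)\subseteq\mathrm{Def}(F)$, and with the always-valid inclusion $\mathrm{Def}(F)\subseteq\mathrm{Def}(E)$ we get $\mathrm{Def}(E)=\mathrm{Def}(F)$; comparing classes then gives $E=F$, since $[x]_E\in\mathrm{Def}(F)$ is a union of $F$-classes containing $x$, so $[x]_F\subseteq[x]_E$, while $E\subseteq F$ gives $[x]_E\subseteq[x]_F$. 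The only non-routine step is spotting the identity $(A^c)^F=(A_F)^c$ and recognizing the second coordinate of $(A,B)^+\wedge(A,B)^{++}$ as the $F$-boundary $A^F\setminus A_F$ of $A$; everything else is bookkeeping with the operation formulas already in hand.
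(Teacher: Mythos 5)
Your proof is correct and follows essentially the same route as the paper: part (a) by direct computation of $(A,B)^*\vee(A,B)^{**}$, and part (b) by reducing the double Stone identity to $(A^c)^F\cap A^F=\emptyset$ for all $A\in\mathrm{Def}(E)$, deducing $\mathrm{Def}(E)=\mathrm{Def}(F)$ via $A^F\subseteq((A^c)^F)^c=A_F$, and then comparing equivalence classes to get $E=F$. The only cosmetic difference is that for the ``if'' direction the paper cites the known fact that $\mathit{RS}(E)$ is a double Stone algebra, whereas you verify $a^+\wedge a^{++}=0$ directly.
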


\begin{proof}
(a) For $(A,B) \in \mathit{RS}(E,F)$,
\[
(A,B)^* \vee (A,B)^{**} = ( (B^c)^F, (B^c)^F  ) \vee (B_F, B) = ( (B_F)_c \cup B_F, (B_F)^c \cup B) = (U,U).
\]
This is because $B_F \subseteq B$ gives $B^c \subseteq (B_F)^c$ and so $(B_F)^c \cup B \supseteq B^c \cup B = U$.

(b) If $E = F$, then $\mathit{RS}(E,F)$ coincides to the rough set algebra $\mathit{RS}(E)$.
It is well known that $\mathit{RS}(E)$ forms a double Stone algebra; see \cite{Com93, PomPom88}. 
Conversely, assume that $\mathit{RS}(E,F)$ forms a double Stone algebra. Then
\[
(A,B)^+ \wedge (A,B)^{++} = ( A^c, (A^c)^F  ) \wedge (A, A^F) = (A^c \cap A, (A^c)^F \cap A^F) = (\emptyset,\emptyset)
\]
holds for all $(A,B) \in \mathit{RS}(E,F)$. Because $A^c \cap A = \emptyset$ holds trivially, we have that
this is equivalent to
\begin{equation} \label{Eq:CondStone}
 (A^c)^F \cap A^F = \emptyset \text{ \ for all $A \in \mathrm{Def}(E)$}.
\end{equation}

We prove that \eqref{Eq:CondStone} is equivalent to $\mathrm{Def}(E) = \mathrm{Def}(F)$. 
Suppose that $A \in \mathrm{Def}(E) = \mathrm{Def}(F)$.
Then $A^c \in \mathrm{Def}(E) = \mathrm{Def}(F)$ and we get
\[ (A^c)^F \cap A^F = A^c \cap A = \emptyset, \]
that is, \eqref{Eq:CondStone} holds. 

Conversely, suppose that \eqref{Eq:CondStone} holds. Since $E \subseteq F$, $F$-classes are unions of $E$-classes.
Thus, $\mathrm{Def}(F) \subseteq \mathrm{Def}(E)$. Let $A \in \mathrm{Def}(E)$. Then \eqref{Eq:CondStone} implies
$A^F \subseteq ((A^c)^F)^c = A_F$. Because $A_F \subseteq A^F$, we get $A_F = A^F$, that is, $A \in \mathrm{Def}(F)$.
Thus, $\mathrm{Def}(E) = \mathrm{Def}(F)$. 

Finally, in order to prove $E = F$, we show $[x]_E = [x]_F$ for any $x \in U$. Suppose that $x \in U$. Because $E \subseteq F$,
we have $[x]_E \subseteq [x]_F$. Let $a \in [x]_F$. We have $x \in [x]_E$ and $x \in [a]_F$. Thus $[x]_E \cap [a]_F \neq \emptyset$
and so $a \in ([x]_E)^F$. Because $[x]_E \in \mathrm{Def}(E) = \mathrm{Def}(F)$, we have $([x]_E)^F = [x]_E$ and $a \in [x]_E$.
Hence also $[x]_F \subseteq [x]_E$ and we have proved $E = F$.
\end{proof}

\section*{Some concluding remarks}

In this paper we have presented observations on a tolerance $T$ compatible with an equivalence $E$.
Surprisingly, this notion was defined in the literature under a different name already in 1995.
Our opinion is that since this concept appears in several contexts, it ``proves'' that the notion is important. 
Our motivation for defining this concept was that we wanted to make the upper approximation $X^E$ of a set $X$ ``softer''. 
A tolerance $T$ compatible with $E$ turned out to be a suitable for this, because it connects the tolerance 
$T$ to the equivalence $E$ firmly, so that the connections between approximations defined in terms of $T$ and $E$ 
are not arbitrary. In \cite{Slowinski95}, the authors ``extend'' the equivalence $E$ by accepting that objects which are not
indistinguishable but sufficiently close or similar can be ``grouped'' together. More precisely, their
aim was to construct a similarity relation (tolerance) from an indistinguishability relation (equivalence). 

In our study it turned out that tolerances compatible with an equivalence $E$ are closely related to $E$-definability. 
We proved that if $T$ is a tolerance compatible with an equivalence $E$ on $U$, then lower and upper $T$-approximations are 
$E$-definable. This also implies that for each $x \in U$, the neighbourhood $T(x)$ is a union of some $E$-classes.
A block of a tolerance can be seen as a counterpart of an equivalence class of an equivalence relation.
Blocks  are maximal sets in which all elements are similar to each other. We have proved that all $T$-blocks are 
$E$-definable if and only if $T$ is $E$-compatible.

The ordered set of all rough sets $\mathit{RS}(E)$ defined by an equivalence $E$ is known to form a regular double 
Stone algebra. On the other hand, rough sets $\mathit{RS}(T)$ defined by a tolerance $T$ do not necessarily form even a 
semilattice. In this work, we have shown that $\mathit{RS}(E,T)$ forms a complete lattice,
whenever $T$ is $E$-compatible. In addition, if an $E$-compatible tolerance $T$ is induced by an irredundant covering,
we have given a condition under which $\mathit{RS}(E,T)$ forms a regular double $p$-algebra and a Heyting algebra.

\section*{Acknowledgements}
We thank the anonymous referees for the careful reading of our manuscript and their comments and suggestions.
The work of the second and third author was carried out as a part of the EFOP-3.6.1-16-00011 ``Younger and
Renewing University -- Innovative Knowledge City'' project implemented in the framework of the
Sz{\'e}chenyi 2020 program, supported by the European Union, co-financed by the European Social Fund.



\end{document}